\numberwithin{table}{section}
\numberwithin{figure}{section}
\numberwithin{equation}{section}
\definecolor{darkblue}{rgb}{.2, 0.2,.8}
\definecolor{darkgreen}{rgb}{0,0.5,0.3}
\definecolor{darkred}{rgb}{.8, .1,.1}
\newcommand{\dd}{\mathrm{d}}
\newcommand{\psii}{m}
\newcommand{\wh}[1]{\widehat{{#1}}}
\newcommand{\Ta}{\nu_1}
\newcommand{\Tb}{\nu_2}
\newcommand{\Tc}{\nu_3}
\newcommand{\Td}{\nu_4}
\newcommand{\rhoa}{\rho_{-M}}
\newcommand{\rhob}{\rho_0}
\newcommand{\rhoc}{\rho_M}
\newtheorem{lemma}{Lemma}[section]
\newtheorem{theorem}[lemma]{Theorem}
\newtheorem{assumption}{Assumption}
\newtheorem{corollary}[lemma]{Corollary}
\newtheorem{example}{Example}
\newtheorem{remark}{Remark}[section]
\begin{document}
\title[Space-grid approximations of hybrid stochastic differential equations]{Space-grid approximations of hybrid stochastic differential equations and first passage properties}

\author[H. Albrecher]{Hansjoerg Albrecher}
\address{Faculty of Business and Economics,
University of Lausanne,
Quartier de Chambronne,
1015 Lausanne,
Switzerland}
\email{hansjoerg.albrecher@unil.ch}

\author[O. Peralta]{Oscar Peralta}
\address{Faculty of Business and Economics,
University of Lausanne,
Quartier de Chambronne,
1015 Lausanne,
Switzerland}
\email{oscar.peraltagutierrez@unil.ch}

\begin{abstract}
Hybrid stochastic differential equations are a useful tool to model continuously varying stochastic systems which are modulated by a random environment that may depend on the system state itself. In this paper, we establish the pathwise convergence of the solutions to hybrid stochastic differential equations via space-grid discretizations. While time-grid discretizations are a classical approach for simulation purposes, our space-grid discretization provides a link with multi-regime Markov modulated Brownian motions, leading to computational tractability. We exploit our convergence result to obtain efficient approximations to first passage probabilities and expected occupation times of the solutions hybrid stochastic differential equations, results which are the first of their kind for such a robust framework. We finally illustrate the performanced of the resulting approximations in numerical examples.
\end{abstract}
\maketitle
\section{Introduction}
Stochastic systems that enjoy some sort of modulation have attracted considerable attention in probability, both from applied and theoretical perspectives. Examples of these are Markov-modulated Poisson processes \cite{fischer1993markov}, Markov additive processes \cite[Chapter XI]{asmussen2003applied} and regime-switching stochastic differential equations \cite[Section II.2]{skorokhod2009asymptotic}, which have found applications in risk theory \cite[Chapter VII]{asmussen2010ruin}, queueing \cite{prabhu1989markov} and finance \cite{elliott2007pricing}, just to name a few. The main appeal behind this framework comes from the flexibility when modelling phenomena whose parameters depend on random environmental factors. For instance, modulation may be used to model correlated catastrophic events in insurance, different workload regimes in a queue, or seasonal sentiment changes in financial markets. In  case the modulation is Markovian, a robust toolbox of matrix-analytic methods has been developed to compute relevant qualities; see e.g. \cite{bladt2017matrix} and references therein. 

In this manuscript, we are interested in a class of processes whose dynamics are described by a continuously varying component which arises as the solution of a stochastic differential equation, and an environmental finite-state component which is modelled after a jump process that may or may not be Markovian. Such a class of processes is referred to in the literature as \emph{hybrid stochastic differential equations} (hybrid SDEs), which generically take the form
\begin{equation}\label{eq:prehybrid}\dd X(t) = \mu( J(t), X(t)) \dd t + \sigma( J(t), X(t))\dd B(t),\end{equation}
where $B$ is a Brownian motion, $J$ is the environmental process, and $\mu$ and $\sigma$ are real functions which satisfy certain regularity conditions.

 In particular, we will be interested in hybrid SDEs which exhibit the following characteristics:
\begin{itemize}

	\item The process $X$ is continuous and one-dimensional.
	\item At a given instant, the environmental process switches state with an intensity that is dependent on the main component.
\end{itemize}

In simpler words, the class of hybrid SDEs we are interested in has components whose evolution is interlaced and dependent on each other. This contrasts with the Markov-modulated case, where one is able to draw a path of the environmental component without any knowledge of the main one (see \cite{nguyen2021wong}). In order to highlight this interlacing feature, in the literature such a model is referred to as hybrid SDEs with state-dependent switching; here we will simply call them hybrid SDEs for brevity.

Existence, uniqueness and stability properties for hybrid SDEs have been extensively studied in recent years \cite{yin2009hybrid,nguyen2016modeling,zhang2020regime}. Another stream of research lies in investigating efficient simulation methods of hybrid SDEs, most of them relying on adapating well-known convergence results into this considerably more challenging scenario \cite{yin2010approximation,nguyen2021wong}. However, computing explicit probabilistic descriptors for such a class of processes has been proven challenging, even in simple scenarios. For instance, several descriptors for Markov-modulated Brownian motion have been explicitly obtained in the literature (see e.g. \cite{asmussen1995stationary,ivanovs2010markov,breuer2012occupation,d2012two,nguyen2022rate}), but virtually none of these results have been extended to hybrid SDEs, even in the Markovian case. Such a task seems intractable simply because the toolbox for general diffusions is considerably more limited than the one available for the Brownian motion.

Our contributions to the literature of hybrid SDEs are the following. In a first instance, we provide a novel pathwise approximation technique by means of a space-grid discretization, resulting in approximations which belong to the class of multi-regime Markov-modulated Brownian motions. The latter are processes that, when restricted to each band of the space-grid, behave like a Markov-modulated Brownian motion. While Wong-Zakai or Euler-Maruyama are the most common approximation methods for simulation of hybrid SDEs in the literature, our proposed approximation sets the course to exploit aspects that are currently only knowm for multi-regime Markov-modulated Brownian motions. As an example, we employ our pathwise approximation result to provide approximations for the descriptors for first exit times of a hybrid SDEs over a band $[0,a]$ , $a>0$, using recent results on the stationary measures of multi-regime Markov-modulated Brownian motion in queueing theory \cite{horvath2017matrix,akar2021transient}. We remark that, to the best of our knowledge, this is the first attempt to compute first passage probabilities and expected occupation times for hybrid SDEs, even when reduced to the case in which the environmental process is Markovian. 

The structure of this paper is as follows. In Section \ref{sec:hybriddef}, we provide the proper framework to construct strong solutions to hybrid SDEs via the uniformization method and concatenation of paths. Later, in Section \ref{sec:approxhSDEs}, we construct the proposed multi-regime Markov-modulated Brownian motion approximation, and prove its uniform convergence in probability to the original hybrid SDE over increasing compact intervals. This approximation result is carried over in two steps by considering an auxiliary process, which serves as a middle point to prove our main result in Theorem \ref{th:mainmain}. In Section \ref{sec:approx} we show how our convergence result can be applied to approximate first passage probabilities of hybrid SDEs, as well as its expected occupation times; some numerical examples of such approximations are explored as well. Finally, in Section \ref{sec:summary} we provide a brief summary of our findings, along with a discussion on some avenues of further research.

\section{Hybrid stochastic differential equations}\label{sec:hybriddef}
Let us provide a precise description of a hybrid SDE and its solution $(J,X)=\{(J(t),X(t))\}_{t\ge 0}$, for which we require a complete probability space $(\Omega, \mathbb{P}, \mathcal{F})$ that supports the following independent components:
\begin{itemize}
  \item A standard Brownian motion ${B}=\{B(t)\}_{t\ge 0}$; this will dictate the continuously-varying nature of $X$.
  \item { A Poisson process ${N}=\{N(t)\}_{t\ge 0}$ of a sufficiently large parameter $\gamma>0$, and a sequence $\{U_\ell\}_{\ell \ge 1}$ of $\mbox{Unif}(0,1)$ i.i.d.  random variables: these will be used to describe the jump dynamics of $J$, with $N$ marking the epochs at which jumps occur, and $U_\ell$ dictating where the $\ell$th jump leads to.}
\end{itemize}
{ Our first goal is to rigorously construct a pair $(J,X)$ that solves the hybrid SDE
\begin{equation}\label{eq:hybrid1}\dd X(t) = \mu( J(t), X(t)) \dd t + \sigma( J(t), X(t))\dd B(t), \quad J(0)=i_0,\; X(0)=x_0,\end{equation}
where $X$ is an a.s.\ continuous real process,
${J}$ is a c\`adl\`ag jump process with finite state space $\mathcal{E}=\{1,\dots,p\}$, $\mu:\mathcal{E}\times \mathds{R} \mapsto \mathds{R}$, $\sigma:\mathcal{E}\times \mathds{R} \mapsto \mathds{R}$, $i_0\in\mathcal{E}$ and $x_0\in\mathds{R}$. 
Furthermore, we require that $(J,X)$ is adapted to the $\mathds{P}$-completed filtration $\mathcal{F}_t$ generated by $\big\{B(s), N(s), U_{N(s)} ; s\le t\big\}$. A pair $(J,X)$ satisfying the aforementioned characteristics is called a \emph{strong solution} of \eqref{eq:hybrid1}. }

\begin{remark}
\normalfont
In the context of classic It\^o diffusions, a strong solution to an SDE needs to be adapted to the filtration generated by the driving noise component $B$ only. In the case of hybrid stochastic differential equations, there is an additional stochastic component, the environmental process $J$, which is  why the term `strong solution' needs to be modified in the hybrid SDE framework.
\end{remark}

In this paper we are interested in hybrid SDEs whose environmental process ${J}$ switches at a state-dependent rate. Specifically, we let $J$ evolve according to
\begin{align}\label{eq:hybrid2}
\mathbb{P}(J(t+h)=j|X(t), J(t)=i)=\delta_{ij} + \Lambda_{ij}(X(t))h + o(h)
\end{align}
for some family of intensity matrices $\{\bm{\Lambda}(x)\}_{x\in \mathds{R}}$ which, at this stage, we assume to be continuous w.r.t.\ $x$.

Note that (\ref{eq:hybrid1}) characterises \emph{part} of the pathwise construction of ${X}$. Indeed if ${J}$ was given, then we could solve ${X}$ during sojourn times of ${J}$ with fixed drift and diffusion coefficients, updating them at each jump time of ${J}$. However, (\ref{eq:hybrid2}) does \emph{not} tell us how ${J}$ is meant to be constructed, it only provides a distributional property, implying that one needs a construction whose characteristics coincide with (\ref{eq:hybrid2}). This would not be a problem if, for instance, $\bm{\Lambda}(x)\equiv\bm{\Lambda}$, $x\in\mathds{R}$, for some intensity matrix $\bm{\Lambda}$; in such a case of a \emph{Markov-modulated diffusion}, the evolution of ${J}$ does not depend on the values of ${X}$, and thus can be constructed via uniformization arguments using the Poisson process ${N}$ and the sequence $\{U_\ell\}_{\ell\ge 1}$ only (see \cite{nguyen2021wong} for more details). In the general case of hybrid SDEs, the construction is more involved since ${J}$ evolves using information provided by ${X}$. Below we discuss a construction developed in \cite{allan2022hybrid} which relies on using a related inhomogeneous uniformization argument. First, we state some assumptions needed.

\begin{assumption}\label{ass:Lipschitz1}
For all $i\in \mathcal{E}$, $\mu(i,\cdot)$ and $\sigma(i,\cdot)$ are Lipschitz-continuous. i.e.\ there exists some $K>0$ such that
\begin{align*}
|\mu(i,x)-\mu(i,y)|\vee|\sigma(i,x)-\sigma(i,y)|\le K|x-y|\quad\mbox{for all}\quad x,y\in\mathds{R}.
\end{align*}
\end{assumption}
\begin{assumption} \label{ass:Bound1}
The family $\{\bm{\Lambda}(x)\}_{x\in\mathds{R}}$ is uniformly bounded, i.e.\  
\[\gamma:=\sup_{x\in\mathds{R}} |\Lambda_{ii}(x)| < \infty.\]
\end{assumption}

Under Assumption \ref{ass:Lipschitz1}, we can guarantee the existence of a unique and strong solution to the (ordinary) stochastic differential equation (SDE)
\begin{equation}\label{eq:strongsol1} 
X^{[i,x,v]}(t) =x + \int_0^t\mu(i,X^{[i,x,v]}(t))\dd r + \int_0^t\sigma(i,X^{[i,x,v]}(t))\dd B^{[v]}(r),\end{equation}
for all $i\in\mathcal{E}$, $x\in\mathbb{R}$ and $v,t\ge 0$, where $B^{[v]}(t):= B(v+t)-B(v)$ is a time-shifted version of ${B}$. In other words, ${X}^{[i,x,v]}$ corresponds to the solution of the SDE driven by ${B}^{[v]}$, with coefficients $\mu(i,\cdot)$ and $\sigma(i,\cdot)$, and starting point $x$. On the other hand, Assumption \ref{ass:Bound1} tells us that the jumps of ${J}$ are \emph{dominated} by those of a Poisson process of intensity $\gamma$, meaning that we can use the arrival points of ${N}$ as the (possible) jump epochs of ${J}$.

The precise construction of $(J,X)$ is as follows. Let $\{\theta_\ell\}_{\ell\ge 0}$ denote the arrival times of ${N}$ (with $\theta_0=0$). Define $X(0)=x_0$ and $J(0)=i_0$, and for $t\in (0,\theta_1)$, let $J(t)=i_0$ and $X(t)=X^{[i_0,x_0, 0]}(t)$: we have defined the processes ${X}$ and ${J}$ in $[0,\theta_1)$. Since we want ${X}$ to be continuous, we ought to define $X(\theta_1)=X(\theta_1-)$. For ${J}$, we decide if it jumps or not at time $\theta_1$ employing the values of $U_1$ and $X(\theta_1)$ as follows:
\begin{equation}\label{eq:Jtheta1}J(\theta_1)= k \quad\mbox{if}\quad U_1\in\left[\sum_{j=1}^{k-1} \delta_{i_0j} + \frac{\Lambda_{i_0 j}(X(\theta_1))}{\gamma},\sum_{j=1}^{k} \delta_{i_0j} + \frac{\Lambda_{i_0 j}(X(\theta_1))}{\gamma} \right).\end{equation}
Although at a first look (\ref{eq:Jtheta1}) may look complex, we are simply using $U_1$ and $X(\theta_1)$ in such a way that ${J}$ jumps to State $k$ at time $\theta_1$ with a mass given by the $(i_0, k)$-th entry of the probability matrix $\bm{I}+\bm{\Lambda}(X(\theta_1))/\gamma$, the uniformized version of $\bm{\Lambda}(X(\theta_1))$ (which we rigorously verify in Lemma \ref{lem:unif0}). After having defined $(J,X)$ in $[0,\theta_1]$, we construct ${X}$ in subsequent intervals $(\theta_\ell,\theta_{\ell + 1}]$ by \emph{concatenating} strong solutions of the type (\ref{eq:strongsol1}) with appropriate chosen values for $(i,x,v)$, as well as using a decision rule similar to (\ref{eq:Jtheta1}) to establish which states ${J}$ visits. More specifically, in a recursive manner, for $\ell=1,2,3,\dots$ and $t\in(0,\theta_{\ell+1}-\theta_\ell)$ let 
\begin{align}
X(\theta_{\ell} + t)& = X^{[i_\ell, x_\ell,\theta_\ell]}(t),\label{eq:constr1}\\
 X(\theta_{\ell+1})& = X(\theta_{\ell+1}-),\label{eq:constr2}\\
J(\theta_{\ell} + t)& = i_\ell,\label{eq:constr3}\\
J(\theta_{\ell+1})& = k\quad\mbox{if}\quad U_\ell\in\left[{C}_{\ell}(k),{C}_{\ell}(k) + {D}_{\ell}(k) \right),\label{eq:constr4}
\end{align}
where $x_\ell=X(\theta_\ell)$, $i_\ell=J(\theta_\ell)$ and
\[{C}_{\ell}(k)=\sum_{j=1}^{k-1}\delta_{i_\ell j} + \frac{{\Lambda}_{i_\ell j}({X}(\theta_{\ell+1}))}{\gamma},\quad {D}_{\ell}(k)=\delta_{i_\ell k} + \frac{{\Lambda}_{i_\ell k}({X}(\theta_{\ell+1}))}{\gamma}.\]
A few aspects that are straightforward to verify about this construction:
\begin{itemize}
	\item ${X}$ and ${J}$ are $\mathcal{F}_t$-adapted,
	\item ${X}$ is continuous at $\theta_1, \theta_2,\dots$, and thus, has a.s.\ continuous paths,
	\item ${J}$ is c\`adl\`ag,
	\item ${X}$ solves the hybrid SDE (\ref{eq:hybrid1}) on every interval $(\theta_\ell,\theta_{\ell+1})$, $\ell=0,1,2,\dots$.
	\end{itemize}
Thus, $(J,X)$ is indeed a strong solution to (\ref{eq:hybrid1}).

\begin{remark}\label{rem:nonunique1}
\normalfont
	Note that ${X}$ is the unique solution to (\ref{eq:hybrid1}) under this \emph{particular} construction of ${J}$. Indeed, if we choose a different construction of ${J}$, the process ${X}$ may take another form. For instance, use the r.v.'s $U_\ell'=1-U_\ell$ instead of $U_\ell$ in (\ref{eq:constr4}); the process ${J}$ will then be different and so will ${X}$.
\end{remark}
We still need to verify that (\ref{eq:hybrid2}) holds, which we prove in a slightly more general scenario next. Recall that (\ref{eq:hybrid2}) only makes sense if $\bm{\Lambda}(x)$ is continuous w.r.t.\ $x$. A generalization of this property for discontinuous $\bm{\Lambda}(\cdot)$ is given by
\begin{align}
&\mathbb{P}\left( J(t+v) = J(t) \mbox{ for all }v\in[0,s]\;\mid\; \mathcal{F}_t\otimes \mathcal{F}^{{B}}_{t+s}, X(t)=x, J(t)=i\right)\nonumber\\
& \quad= \exp\left(\int_0^s\Lambda_{ii}(X^{[i,x,t]}(v))\dd v\right),\label{eq:alternativeJ1}\\
&\mathbb{P}\big(J(\theta_\ell)=j\;\mid\; J(\theta_\ell-)=i, J(\theta_\ell-)\neq J(\theta_\ell), X(\theta_\ell) \big) = \frac{\Lambda_{ij}(X(\theta_\ell))}{|\Lambda_{ii}(X(\theta_\ell))|},\quad i\neq j,\label{eq:alternativeJ2}
\end{align}
where $\mathcal{F}^{{B}}_t$ is the $\mathbb{P}$-completion of the $\sigma$-algebra generated by $\{B(s): 0\le s\le t\}$. In essence, (\ref{eq:alternativeJ1}) and (\ref{eq:alternativeJ2}) correspond to how inhomogeneous Markov jump processes are classically constructed through their integrated jump intensities/hazard rates (see e.g. \cite[Ch. 13]{trivedi2017reliability}). That (\ref{eq:alternativeJ1}) and (\ref{eq:alternativeJ2}) imply (\ref{eq:hybrid2}) for continuous $\bm{\Lambda}(\cdot)$ is readily obtained by noting that 
\[\exp\left(\int_0^h\Lambda_{ii}(X^{[i,x,t]}(v))\dd v\right)=1 + \Lambda_{ii}(x)+o(h),\]
where we employed the continuity of $X^{[i,x,t]}$ too. For sake of completeness, we now show that (\ref{eq:alternativeJ1}) and (\ref{eq:alternativeJ2}) indeed hold; this proof is a simplified version of an analogous result found in \cite{allan2022hybrid}.

\begin{lemma}\label{lem:unif0} Let $(J,X)$ be constructed via (\ref{eq:constr1})-(\ref{eq:constr4}). Then (\ref{eq:alternativeJ1}) and (\ref{eq:alternativeJ2}) hold.
\end{lemma}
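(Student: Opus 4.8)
The plan is to condition on everything determined at the relevant switching epochs and then integrate out only the uniform variables that drive the decisions (\ref{eq:Jtheta1}) and (\ref{eq:constr4}); the argument is a streamlined version of the one in \cite{allan2022hybrid}. First I would fix the deterministic times $t,s$ in (\ref{eq:alternativeJ1}) and work on the event $\{X(t)=x,\,J(t)=i\}\in\mathcal{F}_t$, setting $m=N(t)$ so that $\theta_m\le t<\theta_{m+1}$ and $J\equiv i$ on $[\theta_m,\theta_{m+1})$ by construction. Since $t$ is deterministic, $\{N(t+u)-N(t)\}_{u\ge0}$ is again a rate-$\gamma$ Poisson process and $\{B(t+u)-B(t)\}_{u\ge0}$ a Brownian motion, both independent of $\mathcal{F}_t$, while the not-yet-used uniforms $U_{m+1},U_{m+2},\dots$ are i.i.d.\ $\mathrm{Unif}(0,1)$ and independent of $\mathcal{F}_t$ and of $B$. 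Enlarging the conditioning from $\mathcal{F}_t$ to $\mathcal{F}_t\otimes\mathcal{F}^B_{t+s}$ only reveals the Brownian path on $[t,t+s]$, so these independences persist. Write $\theta_{m+1}<\cdots<\theta_{m+n}$ for the $N$-arrivals in $(t,t+s]$, with $n$ random and possibly zero.

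The key observation is a flow identity: under Assumption \ref{ass:Lipschitz1}, uniqueness of strong solutions to (\ref{eq:strongsol1}) yields $X^{[i,y,r]}(u+w)=X^{[i,X^{[i,y,r]}(u),\,r+u]}(w)$, and hence, on the event that $J$ has not switched at any of $\theta_{m+1},\dots,\theta_{m+k-1}$, the coefficients in (\ref{eq:constr1}) have never changed, so $X(v)=X^{[i,x,t]}(v-t)$ for all $v\in[t,\theta_{m+k}]$; in particular $X(\theta_{m+k})=X^{[i,x,t]}(\theta_{m+k}-t)$ is measurable with respect to $\mathcal{F}_t\otimes\mathcal{F}^B_{t+s}$ together with the arrival times and the already-consumed uniforms. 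On this event, rule (\ref{eq:constr4}) turns ``$J$ does not switch at $\theta_{m+k}$'' into an interval event for the single fresh uniform attached to $\theta_{m+k}$, of conditional probability $1+\gamma^{-1}\Lambda_{ii}\bigl(X^{[i,x,t]}(\theta_{m+k}-t)\bigr)\in[0,1]$. Conditioning successively over $k=1,\dots,n$ then gives, on $\{X(t)=x,J(t)=i\}$,
\[
\mathbb{P}\Bigl(J(t+v)=i\ \ \forall v\in[0,s]\ \Big|\ \mathcal{F}_t\otimes\mathcal{F}^B_{t+s},\ \theta_{m+1},\dots,\theta_{m+n}\Bigr)=\prod_{k=1}^{n}\Bigl(1+\tfrac1\gamma\Lambda_{ii}\bigl(X^{[i,x,t]}(\theta_{m+k}-t)\bigr)\Bigr),
\]
with the empty product equal to $1$.

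To finish I would average over the arrivals: on $\mathcal{F}_t\otimes\mathcal{F}^B_{t+s}$ the map $f(v):=1+\gamma^{-1}\Lambda_{ii}(X^{[i,x,t]}(v))$ is a fixed bounded function on $[0,s]$ (as $X^{[i,x,t]}|_{[0,s]}$ is $\mathcal{F}^B_{t+s}$-measurable), and $\{\theta_{m+k}-t\}_{k=1}^{n}$ is the point set of a rate-$\gamma$ Poisson process on $[0,s]$ independent of it; the standard identity $\mathbb{E}\bigl[\prod_k f(\tau_k)\bigr]=\exp\bigl(\gamma\int_0^s(f(v)-1)\,dv\bigr)$ for such a process, with $\gamma\int_0^s(f(v)-1)\,dv=\int_0^s\Lambda_{ii}(X^{[i,x,t]}(v))\,dv$, yields (\ref{eq:alternativeJ1}) after the conditional expectation over the arrivals. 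For (\ref{eq:alternativeJ2}) the argument is direct: conditioning on $X(\theta_\ell)$ and on $\{J(\theta_\ell-)=i\}$, the uniform attached to $\theta_\ell$ in (\ref{eq:constr4}) is independent of $X(\theta_\ell)$, so $\mathbb{P}(J(\theta_\ell)=j\mid\cdots)=\gamma^{-1}\Lambda_{ij}(X(\theta_\ell))$ for $j\ne i$ while $\mathbb{P}(J(\theta_\ell)\ne i\mid\cdots)=\gamma^{-1}|\Lambda_{ii}(X(\theta_\ell))|$, whose ratio is (\ref{eq:alternativeJ2}). The main obstacle will be the measure-theoretic bookkeeping in the successive conditioning --- verifying that after adjoining $\mathcal{F}^B_{t+s}$, the arrival times, and the already-used uniforms, the uniform attached to $\theta_{m+k}$ is still uniform and independent, so that the interval probabilities coming from (\ref{eq:Jtheta1})--(\ref{eq:constr4}) may be invoked verbatim at each step; once this is arranged, the flow identity and the Poisson exponential-functional formula make the rest routine.
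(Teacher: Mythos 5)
Your argument is essentially the paper's own proof: both exploit that on the no-switch event $X$ coincides with $X^{[i,x,t]}$ by construction, that each uniformization epoch retains the current state with conditional probability $1+\Lambda_{ii}(X^{[i,x,t]}(\cdot))/\gamma$ via the fresh uniform, and then average over the Poisson arrivals on $[t,t+s]$ — the paper does this by conditioning on $N=k$ and summing the series with uniform order statistics, which is exactly the Poisson exponential-functional identity you invoke — while \eqref{eq:alternativeJ2} follows from the identical ratio argument. The proposal is correct and matches the paper's route, with your explicit flow identity and successive conditioning merely making precise what the paper phrases as ``by construction.''
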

\begin{proof}
Integrating with respect to the number and position of arrivals  ${N}$ in $[t,t+s]$, say $\{\theta^{*}_\ell\}_\ell$,
\begin{align}
&\mathbb{P}\left( J(t+v) = J(t) \mbox{ for all }v\in[0,s]\;\mid\; \mathcal{F}_t\otimes \mathcal{F}^{{B}}_{t+s}, X(t)=x, J(t)=i\right)\nonumber\\
&\quad = \sum_{k=0}^\infty \frac{(\gamma s)^k}{k!} e^{-\gamma s}\int_0^s\cdots\int_0^s  \frac{\prod_{j=1}^k\mathds{P}(J(\theta^*_j)=J(\theta^*_j-)\mid \theta^*_j=t+v_j, X^{[i,x,t]}(v_j),  J(\theta^*_j)=i)}{s^k}\dd v_1\dots\dd v_k,\nonumber\\
&\quad = \sum_{k=0}^\infty \frac{(\gamma s)^k}{k!} e^{-\gamma s}\int_0^s\cdots\int_0^s  \frac{\prod_{j=1}^k\left(1+\Lambda_{ii}(X^{[i,x,t]}(v_j))/\gamma\right)}{s^k}\dd v_1\dots\dd v_k\nonumber\\
&\quad = \sum_{k=0}^\infty \frac{(\gamma s)^k}{k!} e^{-\gamma s} \left(1+\frac{\int_0^s\Lambda_{ii}(X^{[i,x,t]}(v))\dd v}{\gamma s}\right)^k=\exp\left(\int_0^s\Lambda_{ii}(X^{[i,x,t]}(v))\dd v\right),\nonumber
\end{align}
where in the first equality we used that on the event $\{J(t+v) = J(t) \mbox{ for all }v\in[0,s], J(t)=i, X(t)=t\}$, $X(t+v)=X^{[i,x,t]}(v)$ for all $v\in[0,s]$ by construction. Finally, for $i\neq j\in\mathcal{E}$
\begin{align*}
& \mathbb{P}\big(J(\theta_\ell)=j\;\mid\; J(\theta_\ell-)=i, J(\theta_\ell-)\neq J(\theta_\ell), X(\theta_\ell) \big)\\
& \quad = \frac{\mathbb{P}\big(J(\theta_\ell)=j\;\mid\; J(\theta_\ell-)=i, X(\theta_\ell) \big)}{\mathbb{P}\big(J(\theta_\ell-)\neq J(\theta_\ell)\;\mid\; J(\theta_\ell-)=i, X(\theta_\ell) \big)} = \frac{\Lambda_{ij}(X(\theta_\ell))}{|\Lambda_{ii}(X(\theta_\ell))|},
\end{align*}
completing the proof.
\end{proof}

We note that the construction of strong solutions for hybrid SDEs can be carried on in more general scenarios. For instance, Brownian-driven multidimensional hybrid SDEs with unbounded jump intensities are considered in \cite{nguyen2016modeling}, while the construction discussed in \cite{allan2022hybrid} concerns multidimensional past-dependent hybrid SDEs driven by L\'evy processes.

\section{Space-grid approximation of hybrid SDEs}\label{sec:approxhSDEs}

Although solutions to state-dependent hybrid SDEs are easy to simulate following the steps in (\ref{eq:constr1})-(\ref{eq:constr4}), studying their distributional properties is a challenging task (see \cite{yin2009hybrid} and references therein). Here we advocate to study hybrid SDEs using an approximation method via discretizing the space-grid.

Let $(\Omega,\mathbb{P},\mathcal{F})$ be as in Section \ref{sec:hybriddef}. For each $i\in\mathcal{E}$, let $\widehat{\mu}(i,\cdot)$ and $\widehat{\sigma}(i,\cdot)$ be a piecewise constant approximation of ${\mu}(i,\cdot)$ and ${\sigma}(i,\cdot)$ on a space-grid $\{\zeta_m\}$ of $\mathds{R}$. In this paper, we assume the following properties.

\begin{assumption}\label{ass:stronghat1}
For each $i\in\mathcal{E}$:
\begin{itemize}
	\item $\widehat{\mu}(i,\cdot)$ and $\widehat{\sigma}(i,\cdot)$ are right-continuous with left limits,
\item $|\widehat{\mu}(i,\cdot)|\le |\mu (i,\cdot)|$ and $|\widehat{\sigma}(i,\cdot)|\le |\sigma(i,\cdot)|$,
\item $\widehat{\mu}$ and $\widehat{\sigma}$ are such that the stochastic differential equation
\begin{equation}\label{eq:SDEhat1}\widehat{X}^{[i,x,v]}(t) =x + \int_0^t\widehat{\mu}(i,\widehat{X}^{[i,x,v]}(t))\dd r + \int_0^t\widehat{\sigma}(i,\widehat{X}^{[i,x,v]}(t))\dd B^{[v]}(r)\end{equation}
admits a strong and unique solution for all $x\in\mathds{R}$ and $v\ge 0$,

\item the space-grid $\{\zeta_m\}$ has a finite cardinality.
\end{itemize}
\end{assumption}
Below we present an elementary stability condition that is useful for our forthcoming developments.
\begin{lemma}\label{lem:finitesecond1}
Let $\widehat{X}^{[i,x,v]}$ and $\widehat{X}^{[i,x,v]}$ be the solutions of \eqref{eq:strongsol1} and \eqref{eq:SDEhat1}, respectively, where $\mu$, $\sigma$, $\wh{\mu}$ and $\wh{\sigma}$ attain Assumptions \ref{ass:Lipschitz1} and \ref{ass:stronghat1}. Then,
\[\mathbb{E}\left(\sup_{s\le t}|X^{[i,x,v]}(s)|^2\right)<\infty\quad\mbox{and}\quad\mathbb{E}\left(\sup_{s\le t}|\wh{X}^{[i,x,v]}(s)|^2\right)<\infty.\]
\end{lemma}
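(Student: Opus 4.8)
The plan is to establish the standard $L^2$ moment bound for SDEs with Lipschitz coefficients, using the fact that the piecewise-constant approximations $\wh\mu$ and $\wh\sigma$, while not Lipschitz, are globally dominated by the linearly-growing functions $|\mu|$ and $|\sigma|$. Since the estimates for $X^{[i,x,v]}$ and $\wh X^{[i,x,v]}$ are entirely parallel, I would carry out the argument once for a generic solution $Y$ of an SDE with coefficients $b(\cdot)$ and $a(\cdot)$ satisfying $|b(y)|\vee|a(y)|\le C(1+|y|)$ for some $C>0$, and then note that both equations fall in this framework. For \eqref{eq:strongsol1} the linear growth bound follows from Assumption \ref{ass:Lipschitz1} (Lipschitz continuity implies $|\mu(i,y)|\le|\mu(i,0)|+K|y|$, and likewise for $\sigma$), and for \eqref{eq:SDEhat1} it follows by combining the domination $|\wh\mu(i,\cdot)|\le|\mu(i,\cdot)|$, $|\wh\sigma(i,\cdot)|\le|\sigma(i,\cdot)|$ from Assumption \ref{ass:stronghat1} with the same linear growth of $\mu(i,\cdot)$, $\sigma(i,\cdot)$.

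The key steps are as follows. First I would introduce a localizing sequence of stopping times $\tau_n=\inf\{s\ge 0: |Y(s)|\ge n\}$ to ensure all expectations below are finite a priori (so that the It\^o isometry and the manipulations are legitimate), deriving a bound uniform in $n$ and then letting $n\to\infty$ by monotone convergence. For the stopped process, write $Y(s\wedge\tau_n)=x+\int_0^{s\wedge\tau_n}b(Y(r))\dd r+\int_0^{s\wedge\tau_n}a(Y(r))\dd B^{[v]}(r)$, take suprema over $s\le t$, use the elementary inequality $(u_1+u_2+u_3)^2\le 3(u_1^2+u_2^2+u_3^2)$, apply Cauchy--Schwarz to the drift term and the Burkholder--Davis--Gundy (or Doob's $L^2$) inequality to the martingale term, and invoke the linear growth bound to get
\[
\E\Big(\sup_{s\le t}|Y(s\wedge\tau_n)|^2\Big)\le C_1\Big(1+|x|^2+\int_0^t \E\big(\sup_{r\le u}|Y(r\wedge\tau_n)|^2\big)\dd u\Big)
\]
for a constant $C_1$ depending only on $C$, $t$. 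Since the left-hand side is bounded by $n^2<\infty$, Gronwall's inequality yields $\E(\sup_{s\le t}|Y(s\wedge\tau_n)|^2)\le C_1(1+|x|^2)e^{C_1 t}$, a bound independent of $n$; letting $n\to\infty$ and applying the monotone convergence theorem gives $\E(\sup_{s\le t}|Y(s)|^2)<\infty$. Applying this twice, once to $\mu,\sigma$ and once to $\wh\mu,\wh\sigma$, completes the proof.

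I do not expect any serious obstacle here — this is a textbook moment estimate — but the one point requiring a little care is that $\wh\mu(i,\cdot)$ and $\wh\sigma(i,\cdot)$ are \emph{not} Lipschitz (only piecewise constant), so one cannot quote a generic SDE moment theorem that assumes Lipschitz coefficients; existence and uniqueness of the solution to \eqref{eq:SDEhat1} is instead postulated in Assumption \ref{ass:stronghat1}, and the moment bound must be derived using only linear growth, which is exactly what the domination hypotheses provide. The finite cardinality of the grid is not needed for this particular lemma. I would therefore phrase the argument so that it visibly uses only (i) existence of a strong solution and (ii) linear growth of the coefficients, making clear it applies verbatim to both equations.
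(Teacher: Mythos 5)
Your proposal is correct and follows essentially the same route as the paper's proof: localization by stopping times, the inequality $(u_1+u_2+u_3)^2\le 3(u_1^2+u_2^2+u_3^2)$, Cauchy--Schwarz for the drift term, Doob/It\^o isometry for the martingale term, linear growth deduced from the Lipschitz assumption and the domination $|\wh\mu|\le|\mu|$, $|\wh\sigma|\le|\sigma|$, then Gronwall and passage to the limit in the localization. Your observation that only existence of a strong solution plus linear growth is needed (not Lipschitz continuity of $\wh\mu,\wh\sigma$) matches how the paper handles $\wh{X}^{[i,x,v]}$ as well.
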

\begin{proof} 
The $L_2$-uniform boundedness over compact intervals of ${X}^{[i,x,v]}$ is a standard result, which we briefly replicate below for the sake of completeness. Let $\rho_m:=\inf\{s>0 \;:\; |{X}^{[i,x,v]}(s)| > m\}$, so that
\[\frac{1}{3}\left|{X}^{[i,x,v]}(s\wedge \rho_m)\right|^2\le |x|^2+\left|\int_0^{s\wedge \rho_m}\mu\left(i, {X}^{[i,x,v]}(r)\right)\dd r\right|^2 + \left|\int_0^{s\wedge \rho_m}\sigma\left(i, {X}^{[i,x,v]}(r)\right)\dd B^{[v]}(r)\right|^2.\]
By Cauchy-Schwartz,
\begin{align*}
\left|\int_0^{s\wedge \rho_m}\mu\left(i, {X}^{[i,x,v]}(r)\right)\dd r\right|^2 \le (s\wedge \rho_m) \int_0^{s\wedge \rho_m}\left|\mu\left(i, {X}^{[i,x,v]}(r)\right)\right|^2\dd r.
\end{align*}
By Doob's inequality and It\^o isometry,
\begin{align*}
\mathbb{E}\left(\sup_{s\le t} \left|\int_0^{s\wedge \rho_m}\sigma\left(i, {X}^{[i,x,v]}(r)\right)\dd B^{[v]}(r)\right|^2\right)\le 2^2 \mathbb{E}\left( \int_0^{s\wedge \rho_m}\left|\sigma\left(i, {X}^{[i,x,v]}(r)\right)\right|^2\dd B^{[v]}(r)\right).
\end{align*}

By the Assumption \ref{ass:Lipschitz1}, $\mu(i,\cdot)$ and $\sigma(i,\cdot)$ must be at most linearly increasing, meaning that there exists some $K'>0$  such that 
\[|\mu(i,x)\vee \sigma(i,x)|\le K'(1+|x|).\]
Then,
\begin{align*}
\mathbb{E}\left(\sup_{s\le t\wedge \rho_m}\left|{X}^{[i,x,v]}(s)\right|^2\right)&\le 3|x|^2 + 3t\int_{0}^t K'\left( \mathbb{E}\left(\sup_{s\le r\wedge \rho_m}\left|{X}^{[i,x,v]}(s)\right|^2\right)+ 1 \right)\dd r \\&\quad + 3\times2^2\int_{0}^t K'\left( \mathbb{E}\left(\sup_{s\le r\wedge \rho_m}\left|{X}^{[i,x,v]}(s)\right|^2\right)+ 1 \right)\dd r\\
&  = K_1' + K_2' \int_0^t\mathbb{E}\left(\sup_{s\le r\wedge \rho_m}\left|{X}^{[i,x,v]}(s)\right|^2\right)\dd r,
\end{align*}
where $K_1'= 3|x|^2 + 3t^2 + 3\times 2^2t$ and $K_2'= K'(3t + 2^2)$. Gronwall's lemma then yields
\begin{align*}
\mathbb{E}\left(\sup_{s\le t\wedge \rho_m}\left|{X}^{[i,x,v]}(s)\right|^2\right)\le K_1'e^{K_2' t};
\end{align*}
by letting $m\rightarrow\infty$, we get that $\mathbb{E}\left(\sup_{s\le t}\left|{X}^{[i,x,v]}(s)\right|^2\right)<\infty$. Now, since $|\widehat{\mu}|\le |\mu|$ and $|\widehat{\sigma}|\le |\sigma|$, then $\widehat{\mu}$ and $\widehat{\sigma}$ are at most linearly increasing as well. Given this, the statement concerning $\widehat{X}^{[i,x,v]}$ follows by analogous arguments to the previously presented.
\end{proof}
\begin{remark}
\normalfont
Finding conditions under which a stochastic differential equation with discontinuous coefficients has a unique strong solution is an active area of research in recent years. For instance, in \cite{krylov2005strong} it is established that \eqref{eq:SDEhat1} has a unique and strong solution for $\widehat{\sigma}(i,\cdot)= 1$ and general measurable function $\widehat{\mu}(i,\cdot)$ that attains certain integrability properties. In \cite{leobacher2017strong}, the same result is established for Lipschitz-continuous functions $\widehat{\sigma}(i,\cdot)$ and piecewise Lipschitz-continuous functions $\widehat{\mu}(i,\cdot)$, both assumed to be bounded. Given the nature of $\widehat{\mu}(i,\cdot)$ and $\widehat{\sigma}(i,\cdot)$ as piecewise constant approximations of $\mu(i,\cdot)$ and $\sigma(i,\cdot)$, and taking into account the existing results in \cite{krylov2005strong} and \cite{leobacher2017strong}, our setup allows for the case in which $\widehat{\sigma}(i,\cdot)$ and $\sigma(i,\cdot)$ are constant, and $\widehat{\mu}(i,\cdot)$ and $\mu(i,\cdot)$ are bounded functions. Other possible setups that produce strong and unique solutions to \eqref{eq:SDEhat1} ought to be studied on a case by case basis, a topic which goes beyond the scope of this manuscript.
\end{remark}

Now, let $\wh{{J}}$ be a c\`adl\`ag $\mathcal{F}_t$-adapted jump process whose jump intensities are of the form 
\begin{equation}\label{eq:levdepJ1}
\mathbb{P}(\wh{J}(t+h)=j|\wh{X}(t), \wh{J}(t)=i)=\delta_{ij} + \wh{\Lambda}_{ij}(\wh{X}(t))h + o(h)
\end{equation}
with $\wh{\bm{\Lambda}}(\cdot)$ being a piecewise constant approximation of $\bm{\Lambda}(\cdot)$ over the space-grid $\{\zeta_m\}$.
Under Assumption \ref{ass:stronghat1} and supposing that the path $\widehat{J}$ is \emph{somehow} known, the solution $\widehat{X}$ to the hybrid SDE
\begin{equation}\label{eq:hybridapp1}\dd \wh{X}(t) = \wh{\mu}( \wh{J}(t), \wh{X}(t)) \dd t + \wh{\sigma}( \wh{J}(t), \wh{X}(t))\dd B(t), \quad \wh{J}(0)=i_0,\; \wh{X}(0)=x_0,\end{equation}
can be constructed by a piecewise concatenation of paths, analogously to (\ref{eq:constr1})-(\ref{eq:constr2}). The resulting process is such that, when restricted to a space interval $(\zeta_m,\zeta_{m+1})$ and to the time intervals for which $\widehat{J}$ is equal to $i$, $\widehat{X}$ behaves like a Brownian motion with drift $\widehat{\mu}(i,\zeta_m)$ and noise coefficient $\widehat{\sigma}(i,\zeta_m)$. In fact, the process $\widehat{X}$ falls within the class of \emph{multi-regime Markov-modulated Brownian motions} studied in \cite{horvath2017matrix} (see \cite{mandjes2003models} for an earlier reference in the case $\widehat{\sigma}\equiv 0$). Heuristically speaking, since the functions $\widehat{\mu}$, $\widehat{\sigma}$ and $\wh{\bm{\Lambda}}$ are space-grid approximations of $\mu$, $\sigma$ and $\bm{\Lambda}$, one can expect that $\wh{X}$ approximates the original solution $X$. A main purpose of this paper is to formalize this statement.

\begin{remark}\label{rem:cons_hatJ} 
\normalfont
In the previous paragraph we did not specify how $\wh{J}$ is constructed. This was on purpose: while it is straightforward to follow analogous steps to those in (\ref{eq:constr3})-(\ref{eq:constr4}) to define $\wh{J}$, here we need to follow slightly different steps in order to guarantee the pathwise convergence of $(\widehat{J},\widehat{X})$ to $(J,X)$. The specifics of such a construction are contained in Subsection \ref{subsec:tildeXhatX}.
\end{remark}

To prove pathwise convergence of $(\wh{J},\wh{X})$ to $({J}, {X})$ as some of its parameters go to $\infty$, we need to proceed with care. First we study the unique continuous strong solution $\tilde{{X}}$ to 
\begin{equation}\label{eq:hybridapptilde1}\dd \tilde{X}(t) = \wh{\mu}( {J}(t), \tilde{X}(t)) \dd t + \wh{\sigma}( {J}(t), \tilde{X}(t))\dd B(t), \quad {J}(0)=i_0,\; \tilde{X}(0)=\tilde{x}_0,\end{equation}
where ${J}$ is the jump process constructed through (\ref{eq:constr1})-(\ref{eq:constr4}), while $X$ is recursively defined by 
\begin{align}
\tilde{X}(\theta_{\ell} + t) = \widehat{X}^{[i_\ell, \tilde{x}_\ell,\theta_\ell]}(t)\quad\mbox{and}\quad\tilde{X}(\theta_{\ell+1})= \widehat{X}(\theta_{\ell+1}-)\quad \mbox{where} \quad\tilde{x}_\ell = \tilde{X}(\theta_{\ell}).\label{eq:constrXtilde0}\end{align} 
The \emph{practicality} of the process $\tilde{{X}}$ is essentially null. $\tilde{{X}}$ is an approximation to ${X}$; however, it needs ${J}$ and ${X}$ to be constructed. This does not mean that it is ill-defined; in fact, once $(J,X)$ is constructed (which can be done without issues following (\ref{eq:constr1})-(\ref{eq:constr4})), then $\tilde{{X}}$ simply arises as a concatenation of solution paths. Instead, the real usefulness of $\tilde{{X}}$ is in providing an intermediate step towards quantifying the approximation error $|{X}-\wh{{X}}|$: we will first measure $|{X}-\tilde{{X}}|$ (discretization error brought by $\widehat{\mu}$ and $\widehat{\sigma}$) and then the discrepancy between $\tilde{{X}}$ and $\wh{{X}}$ (discretization error brought by $\wh{\bm{\Lambda}}$).

\subsection{Measuring $|{X}-\tilde{{X}}|$.}
In order to quantify the approximation error of $\tilde{{X}}$ to $X$, let us first measure the uniform mean-square distance over compact time intervals between ${X}$ and $\tilde{{X}}$ in the case $\mathcal{E}=\{1\}$ (that is, no switching occurs). Abusing notation for a brief moment, let us write $\mu$, $\sigma$, $\widehat{\mu}$ and $\widehat{\sigma}$ instead of $\mu(1,\cdot)$, $\sigma(1,\cdot)$, $\widehat{\mu}(1,\cdot)$ and $\widehat{\sigma}(1,\cdot)$, respectively.

\begin{theorem}\label{th:Eequal1} Suppose that $\mathcal{E}=\{1\}$ and let $t\ge 0$. Then,
\begin{align*}
\mathbb{E}\left(\sup_{s\le t}\left| X(s)-\tilde{X}(s)\right|^2\right)\le C(t) \left(\sup_{x\in \mathbb{R}}\left| \mu(x)-\widehat{\mu}(x)\right|^2+ \sup_{x\in \mathbb{R}}\left| \sigma(x)-\widehat{\sigma}(x)\right|^2+|x_0-\tilde{x_0}|^2\right),
\end{align*}
where $C(t)$ takes the form (\ref{eq:Ctaux1}).
\end{theorem}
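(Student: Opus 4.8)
The plan is to set up a standard Gr\"onwall-type estimate for the difference $X-\tilde X$ in the single-regime case, carefully splitting the discrepancy between the exact coefficients $\mu,\sigma$ and the discretized coefficients $\widehat\mu,\widehat\sigma$ into a ``Lipschitz part'' (handled by the Lipschitz constant $K$ from Assumption \ref{ass:Lipschitz1}) and a ``discretization part'' (controlled uniformly by $\sup_x|\mu-\widehat\mu|^2$ and $\sup_x|\sigma-\widehat\sigma|^2$). Concretely, writing $D(s)=X(s)-\tilde X(s)$, from \eqref{eq:strongsol1} and \eqref{eq:hybridapptilde1} (with $\mathcal E=\{1\}$, so no switching) we have
\[
D(s) = (x_0-\tilde x_0) + \int_0^s\bigl(\mu(X(r))-\widehat\mu(\tilde X(r))\bigr)\dd r + \int_0^s\bigl(\sigma(X(r))-\widehat\sigma(\tilde X(r))\bigr)\dd B(r).
\]
First I would insert and subtract $\mu(\tilde X(r))$ in the drift term and $\sigma(\tilde X(r))$ in the diffusion term, so that each integrand splits as $\bigl(\mu(X(r))-\mu(\tilde X(r))\bigr)+\bigl(\mu(\tilde X(r))-\widehat\mu(\tilde X(r))\bigr)$ and similarly for $\sigma$. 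The first summand is bounded by $K|D(r)|$; the second by $\sup_x|\mu(x)-\widehat\mu(x)|$, a deterministic constant.

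Next I would apply the elementary inequality $|a+b+c+d|^2\le 4(|a|^2+|b|^2+|c|^2+|d|^2)$, take $\sup_{s\le u}$ and then expectations, and estimate the four resulting pieces: the initial-value piece gives $4|x_0-\tilde x_0|^2$; the drift pieces are handled by Cauchy--Schwarz in time (as in the proof of Lemma \ref{lem:finitesecond1}), yielding a factor $u$ times the time-integral of $\mathbb E(\sup_{r\le \cdot}|D|^2)$ for the Lipschitz part and $u^2\sup_x|\mu-\widehat\mu|^2$ for the discretization part; the diffusion pieces are handled by Doob's $L^2$-inequality followed by the It\^o isometry, giving $4\cdot 2^2$ times the time-integral of $\mathbb E(\sup_{r\le\cdot}|D|^2)$ for the Lipschitz part and $4\cdot 2^2\, u\,\sup_x|\sigma-\widehat\sigma|^2$ for the discretization part. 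Collecting terms, one arrives at
\[
g(u):=\mathbb E\Bigl(\sup_{s\le u}|D(s)|^2\Bigr)\le A(t)\Bigl(\sup_x|\mu-\widehat\mu|^2+\sup_x|\sigma-\widehat\sigma|^2+|x_0-\tilde x_0|^2\Bigr) + B(t)\int_0^u g(r)\dd r,
\]
valid for $u\le t$, with explicit constants $A(t)$ (polynomial in $t$) and $B(t)=4K^2(t+4)$ or similar. Gr\"onwall's lemma then gives $g(t)\le A(t)e^{B(t)t}=:C(t)$, which is the asserted form \eqref{eq:Ctaux1}.

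There is one technical point that needs care before Gr\"onwall can be applied: the function $g$ must be finite and (at least) measurable/locally bounded on $[0,t]$ — this is exactly what Lemma \ref{lem:finitesecond1} delivers, since $g(u)\le 2\mathbb E(\sup_{s\le u}|X(s)|^2)+2\mathbb E(\sup_{s\le u}|\tilde X(s)|^2)<\infty$, using that $\tilde X$ inherits the linear-growth bound on its coefficients. I expect the main obstacle to be purely bookkeeping: keeping the constants clean through the fourfold split and the Doob/It\^o steps so that the final $C(t)$ matches the promised closed form \eqref{eq:Ctaux1}, and making sure the localization by stopping times $\rho_m$ (needed to justify the It\^o isometry on the stochastic integral before one knows $g$ is finite) can be removed by monotone convergence exactly as in Lemma \ref{lem:finitesecond1}. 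No genuinely new idea beyond the standard SDE-stability machinery is required, but the discretization error must be threaded through as an additive constant rather than absorbed into the Lipschitz bound — that is the only conceptual subtlety.
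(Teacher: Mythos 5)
Your proposal is correct and follows essentially the same route as the paper: a Gr\"onwall estimate for $\mathbb{E}(\sup_{s\le u}|X(s)-\tilde X(s)|^2)$, with the drift handled by Cauchy--Schwarz, the stochastic integral by a maximal $L^2$ bound (your Doob-plus-It\^o-isometry step is just the $p=2$ Burkholder--Davis--Gundy inequality the paper invokes), the integrands split into a Lipschitz part and a uniform discretization part, and finiteness supplied by Lemma \ref{lem:finitesecond1}. The only differences are cosmetic bookkeeping — you split at the process level before squaring (five terms rather than the paper's three-then-two split after squaring), which changes the numerical factors but still yields a constant of the form \eqref{eq:Ctaux1}.
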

\begin{proof}
For $t\ge 0$, define
\[Z(t) = \mathbb{E}\left(\sup_{s\le t}|X(s)-\tilde{X}(s)|^2\right).\]
By Lemma \ref{lem:finitesecond1}, $Z(t)$ is finite for all $t\ge 0$. Using the inequality $(a+b+c)^2\le 3a^2+3b^2+3c^2$,
\begin{align}
Z(t)&\le 3 \mathbb{E}\left(\sup_{s\le t} \left|\int_0^s \mu(X(r)) - \widehat{\mu}(\tilde{X}(r))\dd r\right|^2\right)\\
&\quad + 3\mathbb{E}\left(\sup_{s\le t} \left|\int_0^s \sigma(X(r)) - \widehat{\sigma}(\tilde{X}(r))\dd B(r)\right|^2\right)+ 3|x_0-\tilde{x_0}|^2.\label{eq:Zt1}
\end{align}
Employing the Cauchy-Schwarz inequality we get that
\begin{align}
\mathbb{E}\left(\sup_{s\le t} \left|\int_0^s \mu(X(r)) - \widehat{\mu}(\tilde{X}(r))\dd r\right|^2\right) & \le \mathbb{E}\left(\left(\int_0^t 1\times\left|\mu(X(r)) - \widehat{\mu}(\tilde{X}(r))\right|\dd r\right)^2\right)\nonumber\\
& \le \mathbb{E}\left(\left(\int_0^t 1^2 \dd r\right)\left(\int_0^t \left|\mu(X(r)) - \widehat{\mu}(\tilde{X}(r))\right|^2 \dd r\right)\right)\nonumber\\
& = t \int_0^t \mathbb{E}\left(\left|\mu(X(r)) - \widehat{\mu}(\tilde{X}(r))\right|^2 \right)\dd r,\label{eq:Ztaux1}
\end{align}
while the Burkholder-Davis-Gundy inequality leads to
\begin{align}
\mathbb{E}\left(\sup_{s\le t} \left|\int_0^s \sigma(X(r)) - \widehat{\sigma}(\tilde{X}(r))\dd B(r)\right|^2\right)&\le C_* \mathds{E}\left(\int_0^t \left|\sigma(X(r)) - \widehat{\sigma}(\tilde{X}(r))\right|^2\dd r\right)\nonumber\\
&= C_* \int_0^t \mathds{E}\left(\left|\sigma(X(r)) - \widehat{\sigma}(\tilde{X}(r))\right|^2\right)\dd r\label{eq:Ztaux2}
\end{align}
for some universal constant $C_*>0$. Furthermore, the Lipschitz-continuity of $\mu$ implies
\begin{align}
\left|\mu(X(r)) - \widehat{\mu}(\tilde{X}(r))\right|^2 & \le 2\left|\mu(X(r)) - \mu(\tilde{X}(r))\right|^2 + 2\left|\mu(\tilde{X}(r)) - \widehat{\mu}(\tilde{X}(r))\right|^2\nonumber\\
& \le 2 K^2\left|X(r) - \tilde{X}(r)\right|^2 + 2\sup_{x\in \mathbb{R}}\left| \mu(x)-\widehat{\mu}(x)\right|^2;\label{eq:Ztaux3}
\end{align}
similarly, 
\begin{align}
\left|\sigma(X(r)) - \widehat{\sigma}(\tilde{X}(r))\right|^2
& \le 2 K^2\left|X(r) - \tilde{X}(r)\right|^2 + 2\sup_{x\in \mathbb{R}}\left| \sigma(x)-\widehat{\sigma}(x)\right|^2.\label{eq:Ztaux4}
\end{align}
By virtue of (\ref{eq:Zt1})-(\ref{eq:Ztaux4}), we get
\begin{align*}
Z(t)&\le 6K^2(t+C_*)\int_0^t \mathbb{E}\left(\left|X(r) - \tilde{X}(r)\right|^2\right)\dd r + 6t\sup_{x\in \mathbb{R}}\left| \mu(x)-\widehat{\mu}(x)\right|^2\\
&\quad\quad+ 6t\sup_{x\in \mathbb{R}}\left| \sigma(x)-\widehat{\sigma}(x)\right|^2+3|x_0-\tilde{x_0}|^2\\
&\le 6K^2(t+C_*)\int_0^t Z(r)\dd r + 6t\sup_{x\in \mathbb{R}}\left| \mu(x)-\widehat{\mu}(x)\right|^2+ 6t\sup_{x\in \mathbb{R}}\left| \sigma(x)-\widehat{\sigma}(x)\right|^2+ 3|x_0-\tilde{x_0}|^2.
\end{align*}
Finally, Gronwall's lemma implies that
\begin{align*}
Z(t)&\le C(t)\left(\sup_{x\in \mathbb{R}}\left| \mu(x)-\widehat{\mu}(x)\right|^2+ \sup_{x\in \mathbb{R}}\left| \sigma(x)-\widehat{\sigma}(x)\right|^2 + |x_0-\tilde{x_0}|^2\right)
\end{align*}
where
\begin{equation}\label{eq:Ctaux1}
C(t)=(6t\vee 3)e^{6K^2(t+C_*)t}.
\end{equation}
\end{proof}
As usual, mean-square convergence implies convergence in probability, which in our case takes the following explicit form.
\begin{corollary}\label{cor:convP1}
Suppose that $\mathcal{E}=\{1\}$ and that there exists some $\alpha>0$ such that 
 \begin{equation}\label{eq:assdisc1} \sup_{x\in \mathbb{R}}\left| \mu(x)-\widehat{\mu}(x)\right|\vee\sup_{x\in \mathbb{R}}\left| \sigma(x)-\widehat{\sigma}(x)\right|\vee|x_0-\tilde{x}_0| \le \alpha.\end{equation}
 Then, for any $n\ge 2$,
 \[\mathbb{P}\left(\sup_{s\le t}\left| X(s)-\tilde{X}(s)\right| \ge \sqrt{3C(t)(\log n)}\alpha\right) \le (\log n)^{-1}.\]
\end{corollary}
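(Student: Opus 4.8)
The plan is to deduce this corollary directly from Theorem~\ref{th:Eequal1} by combining the mean-square bound with Markov's inequality, and then choosing the threshold to make the tail probability equal to $(\log n)^{-1}$. First I would invoke Theorem~\ref{th:Eequal1} together with hypothesis \eqref{eq:assdisc1}: since each of the three quantities $\sup_x|\mu(x)-\wh\mu(x)|$, $\sup_x|\sigma(x)-\wh\sigma(x)|$ and $|x_0-\tilde x_0|$ is bounded by $\alpha$, their squares are each at most $\alpha^2$, so the right-hand side of the bound in Theorem~\ref{th:Eequal1} is at most $3C(t)\alpha^2$. Hence
\begin{equation*}
\mathbb{E}\left(\sup_{s\le t}\left|X(s)-\tilde X(s)\right|^2\right)\le 3C(t)\alpha^2.
\end{equation*}

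Next I would apply Markov's inequality to the nonnegative random variable $\sup_{s\le t}|X(s)-\tilde X(s)|^2$. For any threshold $\lambda>0$,
\begin{equation*}
\mathbb{P}\left(\sup_{s\le t}\left|X(s)-\tilde X(s)\right|\ge\lambda\right)
=\mathbb{P}\left(\sup_{s\le t}\left|X(s)-\tilde X(s)\right|^2\ge\lambda^2\right)
\le\frac{3C(t)\alpha^2}{\lambda^2}.
\end{equation*}
Now set $\lambda=\sqrt{3C(t)(\log n)}\,\alpha$, so that $\lambda^2=3C(t)(\log n)\alpha^2$ and the ratio above collapses to exactly $(\log n)^{-1}$. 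This yields precisely the claimed inequality, and the restriction $n\ge 2$ is there only to ensure $\log n>0$ so that the threshold is a well-defined positive number and the bound is nontrivial.

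There is essentially no obstacle here: the only things to be careful about are that $C(t)$ as defined in \eqref{eq:Ctaux1} is strictly positive (which is immediate, being a product of a positive constant and an exponential), that the square and square-root manipulations are valid (they are, since all quantities are nonnegative), and that the finiteness of $\mathbb{E}(\sup_{s\le t}|X(s)-\tilde X(s)|^2)$ — needed to legitimately apply Markov's inequality — is guaranteed by Lemma~\ref{lem:finitesecond1} as already noted in the proof of Theorem~\ref{th:Eequal1}. If anything, I would add one sentence remarking that the bound is vacuous for $n=2$ unless $3C(t)(\log 2)\alpha^2$ is comparable to the threshold, but since the statement only claims it for $n\ge2$ this is fine as stated.
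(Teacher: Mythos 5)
Your argument is exactly the paper's: bound the mean-square error by $3C(t)\alpha^2$ via Theorem \ref{th:Eequal1} and hypothesis \eqref{eq:assdisc1}, then apply Markov's inequality with the threshold $\sqrt{3C(t)(\log n)}\,\alpha$. It is correct and coincides with the published proof.
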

\begin{proof}
Markov's inequality and Theorem \ref{th:Eequal1} imply that for any $\Delta_{n,t,\alpha}>0$,
\begin{align*}
\mathbb{P}&\left(\sup_{s\le t}\left| X(s)-\tilde{X}(s)\right| \ge\Delta_{n,t,\alpha} \right)  \le \frac{\mathbb{E}\left(\sup_{s\le t}|X(s)-\tilde{X}(s)|^2\right)}{\Delta_{n,t,\alpha}^2}\\
& \le \frac{C(t) \left(\sup_{x\in \mathbb{R}}\left| \mu(x)-\widehat{\mu}(x)\right|^2+ \sup_{x\in \mathbb{R}}\left| \sigma(x)-\widehat{\sigma}(x)\right|^2+|x_0-\tilde{x_0}|^2\right)}{\Delta_{n,t,\alpha}^2} = \frac{3C(t) \alpha^2}{\Delta_{n,t,\alpha}^2}.
\end{align*}
The result follows by choosing $\Delta_{n,t,\alpha}=\sqrt{3C(t)(\log n)}\alpha$.
\end{proof}
Corollary \ref{cor:convP1} implies that we can choose appropriate values of $t$ and $\alpha$ which are dependent on $n$, say $t_n$ and $\alpha_n$, such that 
\begin{align}\label{eq:conditionalphat}
t_n\rightarrow\infty\quad\mbox{and}\quad\sqrt{3 C(t_n)(\log n)}\alpha_n\rightarrow 0,\end{align} and then conclude that the path $\tilde{{X}}$ converges in probability  to ${X}$ uniformly over increasing compact intervals. Here we choose $t_n=\sqrt{\log\log n}$ and $\alpha_n=(\log n)^\beta n^{-\gamma}$ for some $\beta,\gamma > 0$. In such a case, there exists some $n_0\ge 1$ such that for all $n\ge n_0$,
\begin{align}\sqrt{3 C(t_n)(\log n)}\alpha_n&\le (6\sqrt{\log \log n}) e^{12K^2\log\log n} (\log n)^\beta n^{-\gamma}\nonumber\\
& \le (\log n) (\log n)^{12K^2} (\log n)^\beta n^{-\gamma}\nonumber\\
& = (\log n)^{\beta_* + \beta} n^{-\gamma},\label{eq:Ctlognalpha}\end{align}
where $\beta_* = 1+12K^2$; note that $n_0$ does not depend on the choice of $\beta$ or $\gamma$. In light of this, we assume the following.
\begin{assumption}\label{ass:discretize_mu_sigma}There exists some $\beta\ge 0$ and $\gamma>0$ such that for each $n\ge 2$, the approximation $\tilde{{X}}^{(n)}$ (constructed as $\tilde{{X}}$ in \eqref{eq:hybridapptilde1} where the coefficients $\widehat{\mu}$ and $\widehat{\sigma}$ now show an explicit dependence on $n$ via their superscript) satisfies
\[\sup_{x\in \mathbb{R}, i\in\mathcal{E}}\left| \mu(i,x)-\widehat{\mu}^{(n)}(i,x)\right|\vee\sup_{x\in \mathbb{R},  i\in\mathcal{E}}\left| \sigma(i,x)-\widehat{\sigma}^{(n)}(i,x)\right|\vee|x_0-\tilde{x}^{(n)}_0| \le (\log n)^\beta n^{-\gamma}.\]
\end{assumption}

While it might seem that the proof in Theorem \ref{th:Eequal1} can be easily modified to handle the general case $\mathcal{E}=\{1,\dots, p\}$, the switching caused by ${J}$ destroys the local martingale property of the involved stochastic integrals, and thus, the Burkholder-Davis-Gundy inequality no longer applies. For this reason, our approach is to study the discrepancy between ${X}$ and $\tilde{{X}}^{(n)}$ by concatenating paths at the time epochs $\{\theta_\ell\}_{\ell\ge 0}$ for which switching can occur. We explicitly do this in the proof below.
\begin{theorem}\label{th:tildeXmain1}
Let $\mathcal{E}=\{1,\dots, p\}$ and fix some small $\epsilon_1>0$. As $n\rightarrow \infty$, 
\begin{align*}
\mathbb{P}\left(\sup_{s\le \xi_n}\left| X(s)-\tilde{X}^{(n)}(s)\right|> n^{-\gamma+\epsilon_1}\right)=o(1).
\end{align*}
where $\xi_n=\sqrt{\log \log\log n}$.
\end{theorem}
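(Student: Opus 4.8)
The plan is to reduce the multi-state case to the single-state estimate of Theorem~\ref{th:Eequal1} by conditioning on the jump structure of $J$ and concatenating the error across the Poisson epochs $\{\theta_\ell\}_{\ell\ge 0}$. Fix a large time horizon $T$ (later taken to be $\xi_n$) and let $N(T)$ be the number of Poisson arrivals of $N$ in $[0,T]$; since $N$ has rate $\gamma$, $N(T)$ is highly concentrated around $\gamma T$ and the probability that $N(T)$ exceeds, say, $2\gamma T$ decays exponentially in $T$. On the event $\{N(T)\le L\}$ for $L=\lceil 2\gamma\xi_n\rceil$, I would decompose the path of $(J,X)$ and $(J,\tilde X^{(n)})$ into the at most $L$ sojourn intervals on which $J$ is constant; on each such interval both processes solve an ordinary (non-hybrid) SDE with the same environment, differing only in that one uses $(\mu,\sigma)$ and the other uses $(\wh\mu^{(n)},\wh\sigma^{(n)})$, so Theorem~\ref{th:Eequal1} applies on that interval with the initial-condition discrepancy being the error accumulated up to the start of the interval.

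Concretely, let $e_\ell:=\sup_{\theta_\ell\le s\le\theta_{\ell+1}}|X(s)-\tilde X^{(n)}(s)|$ and $d_\ell:=|X(\theta_\ell)-\tilde X^{(n)}(\theta_\ell)|$. Applying Theorem~\ref{th:Eequal1} to the sojourn on $(\theta_\ell,\theta_{\ell+1}]$ (conditionally on $\mathcal F_{\theta_\ell}$, with the frozen environment $i_\ell$ and with horizon $\theta_{\ell+1}-\theta_\ell\le T$) gives
\begin{align*}
\mathbb E\big(e_\ell^2\mid\mathcal F_{\theta_\ell}\big)\le C(T)\Big(\sup_{x,i}|\mu(i,x)-\wh\mu^{(n)}(i,x)|^2+\sup_{x,i}|\sigma(i,x)-\wh\sigma^{(n)}(i,x)|^2+d_\ell^2\Big)=:C(T)\big(\alpha_n^2+d_\ell^2\big),
\end{align*}
where $\alpha_n=(\log n)^\beta n^{-\gamma}$ by Assumption~\ref{ass:discretize_mu_sigma}. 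Since $d_{\ell+1}\le e_\ell$ (continuity at $\theta_{\ell+1}$, and $\tilde X^{(n)}$ is likewise continuous), this yields the recursion $\mathbb E(d_{\ell+1}^2\mid\mathcal F_{\theta_\ell})\le C(T)(\alpha_n^2+d_\ell^2)$ with $d_0=|x_0-\tilde x_0^{(n)}|\le\alpha_n$. Iterating the recursion over $\ell=0,1,\dots,L-1$ gives $\mathbb E(d_L^2)\le\alpha_n^2\sum_{k=0}^{L}C(T)^k\le\alpha_n^2(L+1)C(T)^{L}$ when $C(T)\ge1$, and the same bound (up to a constant factor) controls $\mathbb E(\max_{\ell\le L}e_\ell^2)$. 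Taking $T=\xi_n=\sqrt{\log\log\log n}$, one has $C(\xi_n)\le(\log n)^{o(1)}$ and $L=O(\sqrt{\log\log\log n})$, so $C(\xi_n)^{L}=\exp\big(O(\sqrt{\log\log\log n}\cdot\log\log\log\log n)\big)=n^{o(1)}$; hence $\mathbb E\big(\sup_{s\le\xi_n}|X(s)-\tilde X^{(n)}(s)|^2\mathds 1_{\{N(\xi_n)\le L\}}\big)\le n^{-2\gamma+o(1)}$. A Markov inequality at level $n^{-\gamma+\epsilon_1}$ then bounds the in-probability event by $n^{-2\epsilon_1+o(1)}=o(1)$, and the contribution of the complementary event $\{N(\xi_n)>L\}$ is exponentially small in $\xi_n$, hence $o(1)$; combining the two proves the claim.

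The main obstacle I anticipate is the bookkeeping needed to make the conditional application of Theorem~\ref{th:Eequal1} rigorous in the presence of a \emph{random} number of sojourn intervals whose lengths and starting environments are themselves determined by $X$ (through the state-dependent switching mechanism~\eqref{eq:constr4}). One must be careful that, conditionally on $\mathcal F_{\theta_\ell}$, the post-$\theta_\ell$ Brownian increments are still a fresh Brownian motion and that both $X$ and $\tilde X^{(n)}$ on $(\theta_\ell,\theta_{\ell+1}]$ are driven by this same increment with the same (now known) environment $i_\ell$ — this is exactly the content of the concatenation construction in~\eqref{eq:constr1}--\eqref{eq:constr4} and~\eqref{eq:constrXtilde0}, so it is available, but the strong-Markov-type argument at the stopping times $\theta_\ell$ and the measurability of $d_\ell$ with respect to $\mathcal F_{\theta_\ell}$ need to be stated with care. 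A secondary technical point is verifying the asymptotic estimate $C(\xi_n)^{L}=n^{o(1)}$, which is what forces the very slowly growing horizon $\xi_n=\sqrt{\log\log\log n}$ (a triple logarithm rather than the double logarithm used for the single-state case) and dictates the loss of the factor $n^{\epsilon_1}$ in the statement.
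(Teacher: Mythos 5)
Your proposal is correct in substance and shares the paper's overall strategy --- concatenate the error at the uniformization epochs $\{\theta_\ell\}$, apply the single-regime estimate on each inter-arrival interval with the accumulated error playing the role of the initial-condition discrepancy, and control the number of Poisson arrivals separately --- but the iteration mechanism is genuinely different. The paper never forms a moment recursion: it iterates the in-probability bound of Corollary~\ref{cor:convP1}, letting the error threshold grow multiplicatively by a factor $(\log n)^{\beta_*}$ per interval (the events $A_\ell$), conditioning on the previous good event $A_{\ell-1}^c$, and union-bounding over at most $\lfloor\log\log n\rfloor$ intervals, which yields a failure probability of order $(\log\log n)^2/\log n$; the Poisson count is controlled at level $\lfloor\log\log n\rfloor$ via \cite{glynn1987upper} and Stirling. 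You instead run a conditional $L^2$ recursion on $d_\ell$ taken directly from Theorem~\ref{th:Eequal1}, iterate over only $L=O(\xi_n)$ intervals (so a Chernoff bound on $N(\xi_n)$ around its mean suffices), and apply Chebyshev once at the end; your asymptotic check $C(\xi_n)^L=\exp\big(O((\log\log\log n)^{3/2})\big)=n^{o(1)}$ is valid, and this route buys a polynomially small failure probability $n^{-2\epsilon_1+o(1)}$ in place of the paper's logarithmic rate. The price is exactly the bookkeeping you flag: the conditional use of Theorem~\ref{th:Eequal1} at the stopping times $\theta_\ell$ (strong Markov property of $B$, independence of $N$, $\mathcal{F}_{\theta_\ell}$-measurability and square-integrability of $d_\ell$) must be spelled out, an issue the paper also faces implicitly when it applies Corollary~\ref{cor:convP1} conditionally on $A_{\ell-1}^c$. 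One concrete repair: your one-step bound is stated ``with horizon $\theta_{\ell+1}-\theta_\ell\le T$'', but the interarrival times are unbounded $\mathrm{Exp}(\gamma)$ variables; you should truncate at $\xi_n$, i.e.\ work with $d_\ell'=|X(\theta_\ell\wedge\xi_n)-\tilde X^{(n)}(\theta_\ell\wedge\xi_n)|$ and $e_\ell'=\sup_{\theta_\ell\wedge\xi_n\le s\le\theta_{\ell+1}\wedge\xi_n}|X(s)-\tilde X^{(n)}(s)|$, which costs nothing since the statement only concerns $s\le\xi_n$ and each truncated interval then has length at most $\xi_n=T$, so the frozen-environment processes $X^{[i_\ell,x_\ell,\cdot]}$ and $\widehat X^{[i_\ell,\tilde x_\ell,\cdot]}$ with horizon $T$ dominate the quantities in the recursion.
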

\begin{proof}
First, we claim that 
\begin{equation}\label{eq:Poissontail1}\mathds{P}(N(\xi_n) > \lfloor\log \log n\rfloor)=o(1)\quad\mbox{as}\quad n\rightarrow\infty.\end{equation} Indeed, using \cite[Proposition 1]{glynn1987upper}, for sufficiently large $n$
\begin{align}
\mathds{P}(N(\xi_n) > \lfloor \log\log n\rfloor)\le 
& \left(1-\frac{\gamma\xi_n}{\lfloor\log \log n\rfloor + 1}\right)^{-1}\left(\frac{(\gamma\xi_n)^{\lfloor\log \log n\rfloor}}{\lfloor\log \log n\rfloor!} e^{-\gamma\xi_n}\right):\label{eq:uppertail1}
\end{align}
the first element in the product of the r.h.s. in (\ref{eq:uppertail1}) clearly converges to $1$, while the second element is bounded by
\begin{align*}
&\frac{(\gamma\xi_n)^{\lfloor\log \log n\rfloor}}{\lfloor\log \log n\rfloor!}\\
&\quad \le \left((\gamma\xi_n)^{\lfloor\log\log n\rfloor}\right) \left(\sqrt{2\pi\lfloor\log \log n\rfloor}\left(\frac{\lfloor\log \log n\rfloor}{e}\right)^{\lfloor\log \log n \rfloor} e^{\tfrac{1}{12\lfloor\log\log n\rfloor + 1}}\right)^{-1}\\
&\quad = O \left(\left(\frac{e\gamma \xi_n}{\lfloor\log \log n\rfloor }\right)^{\lfloor\log \log n\rfloor }\right)= o(1).
\end{align*}
Now, fix some large $n\ge 1$ and for all $m\ge 0$ let 
\begin{align*}
A_{m} &=\left\{\sup_{s\le \xi_n\wedge \theta_m}\left| X(s)-\tilde{X}^{(n)}(s)\right| > (\log n)^{m \beta_* + \beta} n^\gamma\right\}.
\end{align*}
Note that
\begin{align*}
\mathbb{P}\left(A_m\right) & = \mathbb{P}(A_{m-1}\cap A_m) + \mathbb{P}(A_{m-1}^c\cap A_m)\\
& \le \mathbb{P}(A_{m-1}) + \mathbb{P}(A_m\mid A_{m-1}^c) = \sum_{\ell=1}^m \mathbb{P}(A_\ell\mid A_{\ell-1}^c)\\
& = \sum_{\ell=1}^m \mathbb{P}\left(\sup_{s\le \xi_n\wedge \theta_\ell}\left| X(s)-\tilde{X}^{(n)}(s)\right| > (\log n)^{\ell \beta_* + \beta} n^\gamma\mid A_{\ell-1}^c \right)\le m(\log n)^{-1},
\end{align*}
where the last inequality follows by using Corollary \ref{cor:convP1} and (\ref{eq:Ctlognalpha}) for all $\widehat{\mu}(i,\cdot)$ and $\widehat{\sigma}(i,\cdot)$, $i\in\mathcal{E}$. Finally, for sufficiently large $n$ (large enough that $(\log \ell)^{\lfloor \log\log \ell\rfloor \beta_* + \beta} \ge n^{\epsilon_1}$ for all $\ell \ge 1$),
\begin{align*}
\left\{\sup_{s\le\xi_n}\left| X(s)-\tilde{X}^{(n)}(s)\right|> n^{-\gamma+\epsilon_1} \right\}\subset \left\{N(\xi_n)>\lfloor \log\log n\rfloor\right\}\cup\bigcup_{\ell=1}^{\lfloor \log\log n\rfloor}A^{(n)}_\ell,
\end{align*}
so that
\begin{align*} 
\mathbb{P}\left(\sup_{s\le\xi_n}\left| X(s)-\tilde{X}^{(n)}(s)\right|> n^{-\gamma+\epsilon_1}\right) \le o(1) + (\lfloor \log\log n\rfloor)^2(\log n)^{-1}=o(1).
\end{align*}
\end{proof}
Now that we have assessed the convergence of $\tilde{X}$ to $X$, we will continue with the discrepancy between $\tilde{X}$ and $\wh{X}$ in the following section, which will ultimately lead to measuring the final error $|{X}-\wh{{X}}|$.

\subsection{Measuring $|\tilde{X}-\widehat{{X}}|$.}\label{subsec:tildeXhatX}
Assessing the discrepancy between $\tilde{X}$ and $\wh{X}$ relies on the following key observation: $\widehat{X}$ is \emph{identical} to $\tilde{X}$ in a compact interval $[0,t]$ if $\widehat{J}$ coincides with $J$ in that interval. Thus, the question of convergence of $\widehat{X}$ to $\tilde{X}$ may be posed as a question of if/when the process $\widehat{J}$ differs from $J$. In order to answer this, we borrow the \emph{decoupling} idea introduced in \cite{allan2022hybrid}. This concept relies on constructing the approximation $\widehat{J}^{(n)}$ (understood as the process $\widehat{J}$ with an explicit dependence on some parameter $n$; we omit such dependence in some places for notational convenience) in such a way that this process decouples or stops being identical to $J$ in an interval $[0,t]$ with a probability that tends to $0$ as $n\rightarrow\infty$. As announced in Remark \ref{rem:cons_hatJ}, instead of constructing $\widehat{J}$ via \eqref{eq:constr3}-\eqref{eq:constr4}, below we describe an alternative (but similar) technique that yields the distributional property \eqref{eq:levdepJ1} as well.


First, let us add an extra process $\{\widehat{H}_\ell\}_{\ell\ge 0}$ which tracks the \emph{decoupling} event of $\widehat{J}$ w.r.t.\ $J$ at the time epochs $\{\theta_\ell\}_{\ell\ge 0}$. Roughly speaking, we say a decoupling occurs at time $\theta_\ell$ if we can no longer guarantee that $\widehat{J}(\theta_\ell)= J(\theta_\ell)$, but we can still guarantee that $\widehat{J}$ is identical to $J$ on $[0,\theta_\ell)$. To properly define this, we consider a process $\{\widehat{H}_\ell\}_{\ell\ge 0}$ taking values in $\{0,1,2\}$, where $\{\widehat{H}_\ell=0\}$ will signify that the decoupling has not occurred by time $\theta_\ell$, $\{\widehat{H}_\ell=1\}$ will signify that the decoupling occured exactly at time $\theta_\ell$, and $\{\widehat{H}_\ell=2\}$ will signify that a decoupling occurred before time $\theta_\ell$. More specifically, let $\widehat{X}=x_0$, $\widehat{J}(\theta_0)=i_0$, $\widehat{H}_0=0$, 
and recursively define for $\ell=0,1,2,\dots$ and $t\in(0,\theta_{\ell+1}-\theta_\ell)$,
\begin{align}
\widehat{X}(\theta_{\ell} + t)& = \widehat{X}^{[\widehat{i}_\ell, \widehat{x}_\ell,\theta_\ell]}(t),\label{eq:constr1a}\\
 \widehat{X}(\theta_{\ell+1})& =\widehat{X}(\theta_{\ell+1}-),\label{eq:constr2a}\\
\widehat{J}(\theta_{\ell} + t)& = \widehat{i}_\ell,\label{eq:constr3a}\\
 \widehat{J}(\theta_{\ell+1})& = k, \; \widehat{H}_{\ell+1}=0 \nonumber\\
&\quad\mbox{if}\quad U_\ell\in \left[{C}_{\ell}(k), {C}_{\ell}(k) + {D}_{\ell}(k)\wedge \widehat{D}_{\ell}(k) \right), \widehat{H}_\ell=0,\label{eq:constr4a0}\\ 
\widehat{J}(\theta_{\ell+1})&\sim (\widehat{D}_{\ell}(k)- {D}_{\ell}(k)\wedge \widehat{D}_{\ell}(k))_{k\in\mathcal{E}}, \; \widehat{H}_{\ell+1}=1 \nonumber\\
&\quad\mbox{if}\quad U_\ell\notin \cup_{k'\in\mathcal{E}}\left[{C}_{\ell}(k'), {C}_{\ell}(k') + {D}_{\ell}(k')\wedge \widehat{D}_{\ell}(k') \right), \widehat{H}_\ell=0,\label{eq:constr4a1}\\
\widehat{J}(\theta_{\ell+1})&\sim (\widehat{D}_{\ell}(k))_{k\in\mathcal{E}}, \; \widehat{H}_{\ell+1}=2\quad\mbox{if}\quad \widehat{H}_\ell\in\{1,2\},\label{eq:constr4a}
\end{align} 

where $\widehat{x}_\ell=\widehat{X}(\theta_\ell)$, $\widehat{i}_\ell=\wh{J}(\theta_\ell)$ and $\widehat{D}_{\ell}(k)=\delta_{i_\ell k} + \frac{\widehat{\Lambda}_{i_\ell k}(\widehat{X}(\theta_{\ell+1}))}{\gamma}$.
Here the notation `$Y\sim\bm{\alpha}$' for $\bm{\alpha}\neq\bm{0}$ denotes `$Y$ is sampled from $\bm{\alpha}/|\bm{\alpha}|$'. 

While Equations \eqref{eq:constr1a}-\eqref{eq:constr3a} are analogous to their counterpart \eqref{eq:constr1}-\eqref{eq:constr3}, note that \eqref{eq:constr4a0}-\eqref{eq:constr4a} are slightly more involved than \eqref{eq:constr4}. Nevertheless, the idea is similar: $\widehat{J}$ is constructed in such a way that at the uniformization epoch $\theta_{\ell+1}$, it lands in State $k$ with probability $\widehat{D}_{\ell}(k)$. Indeed, if $\widehat{H}_\ell= 0$, then in order to land in $k$, either
\begin{itemize}
\item $U_\ell$ is in $\left[{C}_{\ell}(k), {C}_{\ell}(k) + {D}_{\ell}(k)\wedge \widehat{D}_{\ell}(k) \right)$ (with probability ${D}_{\ell}(k)\wedge \widehat{D}_{\ell}(k)$), or
\item $U_\ell$ lands in $[0,1]\setminus\left(\cup_{k'\in\mathcal{E}}\left[{C}_{\ell}(k'), {C}_{\ell}(k') + {D}_{\ell}(k')\wedge \widehat{D}_{\ell}(k')\right)\right)$ and then $k$ gets sampled from the vector $\left(\widehat{D}_{\ell}(k')- {D}_{\ell}(k')\wedge \widehat{D}_{\ell}(k')\right)_{k'\in\mathcal{E}}$ (with probability $\widehat{D}_{\ell}(k)- {D}_{\ell}(k)\wedge \widehat{D}_{\ell}(k)$).\end{itemize} 
In the second case, a decoupling is declared at that step and we let $\widehat{H}_{\ell+1}=1$; under these circumstances, by construction we can still guarantee that $\{\widehat{J}(t)\}_{t < \theta_{\ell+1}} = \{J(t)\}_{t < \theta_{\ell+1}}$ (compare \eqref{eq:constr4} and \eqref{eq:constr4a0}-\eqref{eq:constr4a}) and $\{\widehat{X}(t)\}_{t \le \theta_{\ell+1}} = \{\tilde{X}(t)\}_{t \le \theta_{\ell+1}}$ (compare \eqref{eq:constrXtilde0} and \eqref{eq:constr1a}-\eqref{eq:constr2a}). Once a decoupling occurs, further jumps of $J$ at the uniformization epochs occur by sampling directly from the vector $(\widehat{D}_{\ell}(k'))_{k'\in\mathcal{E}}$, which are points in time at which we can no longer guarantee that $\widehat{J}$ and $J$ are identical (ditto $\widehat{X}$ and $\tilde{X}$). Fortunately, since $\widehat{\Lambda}$ is thought to be close to $\bm{\Lambda}$, so is $\widehat{D}_{\ell}(k)$ to ${D}_{\ell}(k)$, suggesting that the probability of a decoupling at a given step must be low. In the rest of the section, we investigate a stronger statement: decoupling occurs with low probability in any compact time interval.


Similarly to Assumption \ref{ass:discretize_mu_sigma}, let us now make explicit the dependence of $\widehat{\bm{\Lambda}}$, $\widehat{X}$ and $\widehat{J}$ on $n$ via a superscript. In order to obtain convergence rates of the decoupling of $\widehat{J}^{(n)}$ w.r.t.\ $J$, we impose the following regularity condition on $\widehat{\bm{\Lambda}}^{(n)}$.
\begin{assumption}\label{ass:intensities}
For a square matrix $A=\{a_{ij}\}$, let $\Vert\cdot\Vert$ denote the norm defined by
\[\Vert A\Vert=\sup_{i}\left\{\sum_{j}a_{ij}\right\}.\]
The intensity matrix $\bm{\Lambda}(\cdot)$ is $\log$-H\"older continuous, in the sense that there exists a constant $G>0$ such that for any $x,y\in\mathds{R}$,
\[\left\Vert\bm{\Lambda}(x) - \bm{\Lambda}(y)\right\Vert\le \frac{G}{-\log|x-y|}.\]
Furthermore, $\widehat{\bm{\Lambda}}^{(n)}=\widehat{\bm{\Lambda}}$ is right-continuous with left limits and converges uniformly to $\bm{\Lambda}$ at a $\log$-rate, more specifically,
\[\sup_{z\in\mathbb{R}}\left\Vert\widehat{\bm{\Lambda}}^{(n)}(z)- \bm{\Lambda}(z)\right\Vert \le \frac{G}{\log n}.\]
\end{assumption}

\begin{remark}
\normalfont
 Under Assumption \ref{ass:intensities}, $\bm{\Lambda}$ does not admit any discontinuities, however,
 recall that $\log$-H\"older continuity is considerably less restrictive than H\"older- or Lipschitz-continuity. This means that we can \emph{mimic} a discontinuous behaviour for $\bm{\Lambda}$ at a point $z_0$ by considering instead some steep function which behaves like $(\log(z-z_0))^{-1}$ for sufficiently close $z> z_0$.
\end{remark}

\begin{lemma}\label{lem:auxXhat1}
Fix some small $\epsilon_1>0$. Then, 
\begin{equation}\label{eq:aux_result_lemma}\mathbb{P}\left( N(\xi_n)\le \lfloor\log \log n\rfloor, \widehat{H}_{N({\xi_n})}\neq 0, \sup_{s\le \xi_n}\left| X(s)-\widehat{X}^{(n)}(s)\right|\le n^{-\gamma+\epsilon_1}\right)= o(1)\quad\mbox{as}\quad n\rightarrow\infty.\end{equation}
Additionally, Equation \eqref{eq:aux_result_lemma} also holds when $\widehat{X}^{(n)}$ is replaced by $\tilde{X}^{(n)}$.
\end{lemma}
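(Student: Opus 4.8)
The plan is to decompose the event in \eqref{eq:aux_result_lemma} according to the first uniformization epoch at which $\widehat J^{(n)}$ decouples from $J$, and to bound the conditional probability of a decoupling at each such epoch using the closeness hypothesis together with the $\log$-H\"older regularity of Assumption~\ref{ass:intensities}.

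First I would reduce the probability to a finite sum. On $\{N(\xi_n)\le\lfloor\log\log n\rfloor\}$ the event $\{\widehat H_{N(\xi_n)}\neq 0\}$ forces a decoupling to have occurred at some uniformization epoch $\theta_m\le\xi_n$ with $1\le m\le\lfloor\log\log n\rfloor$; letting $m$ be the \emph{first} such index we have $\widehat H_{m-1}=0$ and $\widehat H_m=1$ (the alternative \eqref{eq:constr4a} cannot apply since $\widehat H_{m-1}=0$). Since on $\{\widehat H_{m-1}=0\}$ the construction guarantees $\widehat X^{(n)}(s)=\tilde X^{(n)}(s)$ for all $s\le\theta_m$ (the observation recorded just after \eqref{eq:constr4a}), the hypothesis $\sup_{s\le\xi_n}|X(s)-\widehat X^{(n)}(s)|\le n^{-\gamma+\epsilon_1}$ in particular yields $|X(\theta_m)-\widehat X^{(n)}(\theta_m)|\le n^{-\gamma+\epsilon_1}$. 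Hence the probability in \eqref{eq:aux_result_lemma} is at most
\[
\sum_{m=1}^{\lfloor\log\log n\rfloor}\mathbb P\!\left(\theta_m\le\xi_n,\ \widehat H_{m-1}=0,\ \widehat H_m=1,\ \bigl|X(\theta_m)-\widehat X^{(n)}(\theta_m)\bigr|\le n^{-\gamma+\epsilon_1}\right),
\]
and I would observe that the identical right-hand side covers the ``additionally'' statement, since on $\{\widehat H_{m-1}=0\}$ one may freely replace $\widehat X^{(n)}(\theta_m)$ by $\tilde X^{(n)}(\theta_m)$.

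Next, for each fixed $m$ I would condition on the $\sigma$-algebra $\mathcal{G}_m$ generated by $B$, $N$ and $U_0,\dots,U_{m-2}$, i.e.\ on all the randomness other than the uniform $U_{m-1}$ driving the $m$-th jump decision in \eqref{eq:constr4a0}--\eqref{eq:constr4a1}. On $\{\widehat H_{m-1}=0\}$ every input to that decision — namely $\widehat i_{m-1}=i_{m-1}$, the values $X(\theta_m)$ and $\widehat X^{(n)}(\theta_m)=\tilde X^{(n)}(\theta_m)$, and therefore $C_{m-1}(\cdot)$, $D_{m-1}(\cdot)$, $\widehat D_{m-1}(\cdot)$ — is $\mathcal{G}_m$-measurable, as are the indicators of $\{\theta_m\le\xi_n\}$ and of the closeness event, whereas $U_{m-1}\sim\mathrm{Unif}(0,1)$ is independent of $\mathcal{G}_m$. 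Because the intervals $[C_{m-1}(k),C_{m-1}(k)+D_{m-1}(k)\wedge\widehat D_{m-1}(k))$, $k\in\mathcal{E}$, are pairwise disjoint subintervals of $[0,1)$ with total length $\sum_k D_{m-1}(k)\wedge\widehat D_{m-1}(k)$, and since $\sum_k D_{m-1}(k)=1$, on $\{\widehat H_{m-1}=0\}$ one gets
\[
\mathbb P\bigl(\widehat H_m=1\mid\mathcal{G}_m\bigr)=\sum_{k\in\mathcal{E}}\bigl(D_{m-1}(k)-\widehat D_{m-1}(k)\bigr)^+\le\frac1\gamma\bigl\Vert\bm{\Lambda}(X(\theta_m))-\widehat{\bm{\Lambda}}^{(n)}(\widehat X^{(n)}(\theta_m))\bigr\Vert,
\]
using $D_{m-1}(k)-\widehat D_{m-1}(k)=\gamma^{-1}\bigl(\Lambda_{i_{m-1}k}(X(\theta_m))-\widehat\Lambda^{(n)}_{i_{m-1}k}(\widehat X^{(n)}(\theta_m))\bigr)$. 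Splitting the last norm by the triangle inequality into $\Vert\bm{\Lambda}(X(\theta_m))-\bm{\Lambda}(\widehat X^{(n)}(\theta_m))\Vert+\Vert\bm{\Lambda}(\widehat X^{(n)}(\theta_m))-\widehat{\bm{\Lambda}}^{(n)}(\widehat X^{(n)}(\theta_m))\Vert$ and invoking Assumption~\ref{ass:intensities} — $\log$-H\"older continuity bounds the first summand by $G/((\gamma-\epsilon_1)\log n)$ on the closeness event once $n^{-\gamma+\epsilon_1}<1$, and the uniform $\log$-rate convergence bounds the second by $G/\log n$ — I obtain $\mathbb P(\widehat H_m=1\mid\mathcal{G}_m)\le C/\log n$ on the relevant $\mathcal{G}_m$-measurable event, for a constant $C$ free of $m$ and $n$. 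Taking expectations bounds the $m$-th term by $C/\log n$, and summing over $m$ gives $C\lfloor\log\log n\rfloor/\log n=o(1)$, which proves both the displayed identity and its $\tilde X^{(n)}$ version.

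The step I expect to be the main obstacle is the measurability bookkeeping in the conditioning argument: one must carefully isolate the event $\{\widehat H_{m-1}=0\}$, on which no decoupling has yet occurred, so that $\widehat X^{(n)}$ and $\tilde X^{(n)}$ coincide on $[0,\theta_m]$ and every ingredient of the $m$-th uniformized jump decision is a function of $B$, $N$ and $U_0,\dots,U_{m-2}$ alone — and then check that, conditionally on that information, the chance of a decoupling is \emph{exactly} the total-variation distance between the two uniformized jump laws $D_{m-1}$ and $\widehat D_{m-1}$. Once this is in place the estimate collapses to the two-line bound from Assumption~\ref{ass:intensities} plus a union bound over the $O(\log\log n)$ possible arrival epochs, in the same spirit as the Poisson tail control \eqref{eq:Poissontail1} used in Theorem~\ref{th:tildeXmain1}.
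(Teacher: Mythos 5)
Your proposal is correct and follows essentially the same route as the paper's proof: decompose over the first decoupling epoch, condition on the history up to that epoch so that the decoupling probability becomes the total-variation-type quantity $\sum_k(D(k)-D(k)\wedge\widehat D(k))$, bound it via the triangle inequality and Assumption~\ref{ass:intensities} by $O(1/\log n)$ on the closeness event, and conclude with a union bound over the $O(\log\log n)$ uniformization epochs, noting that $\widehat X^{(n)}$ and $\tilde X^{(n)}$ coincide up to the decoupling time so the same bound covers both claims. The only differences are cosmetic (a single sum over the first decoupling index with an explicit conditioning $\sigma$-algebra, versus the paper's double sum over $N(\xi_n)=\ell$ and $k\le\ell$), and both yield the same $o(1)$ rate.
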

\begin{proof}
For notational convenience, let us write $\widehat{X}$ instead of $\widehat{X}^{(n)}$, and $\widehat{\bm{\Lambda}}$ instead of $\widehat{\bm{\Lambda}}^{(n)}$. Note that
\begin{align}
&\left\{N(\xi_n)\le \lfloor\log \log n\rfloor, \widehat{H}_{N({\xi_n})}\neq 0, \sup_{s\le \xi_n}\left| X(s)-\widehat{X}(s)\right|\le n^{-\gamma+\epsilon_1}\right\}\nonumber\\
&\quad =\bigcup_{\ell=1}^{\lfloor\log \log n\rfloor} \bigcup_{k=1}^\ell\left\{ N(\xi_n) = \ell, \widehat{H}_{k}= 1, \sup_{s\le \xi_n}\left| X(s)-\widehat{X}(s)\right|\le n^{-\gamma+\epsilon_1}\right\}\nonumber\\
&\quad \subseteq \bigcup_{\ell=1}^{\lfloor\log \log n\rfloor} \bigcup_{k=1}^\ell\left\{ N(\xi_n) = \ell,\; \widehat{H}_{k}= 1,\; \sup_{s\le \theta_k}\left| X(s)-\widehat{X}(s)\right|\le n^{-\gamma+\epsilon_1}\right\}.\label{eq:auxunion1}
\end{align}
By conditioning on the history of the processes up to time $\theta_k$ and letting $$F= \{ {N(\xi_n) = \ell,\; \widehat{H}_{k-1}=0,\; \sup_{s\le \theta_k}\left| X(s)-\widehat{X}(s)\right|\le n^{-\gamma+\epsilon_1}}\},$$
we get
\begin{align*}
&\mathbb{P}\left( N(\xi_n)= \ell, \widehat{H}_{k}= 1, \sup_{s\le \theta_k}\left| X(s)-\widehat{X}(s)\right|\le n^{-\gamma+\epsilon_1} \;|\; N(\xi_n), \widehat{H}_{k-1}, \{X(s)\}_{s\le \theta_k}, \{\widehat{X}(s)\}_{s\le \theta_k} \right)\\
&\quad =\mathbb{P}\left(\widehat{H}_{k}= 1 \;|\; N(\xi_n), \{X(s)\}_{s\le \theta_k}, \{\widehat{X}(s)\}_{s\le \theta_k} \right)\times\mathds{1}_{F}\\
&\quad =\mathbb{P}\left(U_{\ell-1}\notin \cup_{k'\in\mathcal{E}}\left[{C}_{\ell-1}(k'), {C}_{\ell-1}(k') + {D}_{\ell-1}(k')\wedge \widehat{D}_{\ell-1}(k') \right) \;|\; N(\xi_n), \{X(s)\}_{s\le \theta_k}, \{\widehat{X}(s)\}_{s\le \theta_k} \right)\times\mathds{1}_{F}\\
&\quad = \left(\sum_{k'\in\mathcal{E}} (\widehat{D}_{\ell-1}(k')-{D}_{\ell-1}(k')\wedge \widehat{D}_{\ell-1}(k'))\right)\times\mathds{1}_{F} \le \left\Vert \frac{{\bm{\Lambda}}({X}(\theta_k))}{\gamma} - \frac{\widehat{\bm{\Lambda}}(\widehat{X}(\theta_k))}{\gamma}\right\Vert\times\mathds{1}_{F}\\
&\quad \le \frac{1}{\gamma}\left[\left\Vert {{\bm{\Lambda}}({X}(\theta_k))} - {{\bm{\Lambda}}(\widehat{X}(\theta_k))}\right\Vert + \left\Vert {{\bm{\Lambda}}(\widehat{X}(\theta_k))} - {\widehat{\bm{\Lambda}}(\widehat{X}(\theta_k))}\right\Vert\right]\times\mathds{1}_{F}\\
&\quad \le \frac{1}{\gamma}\left[\frac{G}{-\log |{X}(\theta_k) - \widehat{X}(\theta_k)|} + \frac{G}{\log n}\right]\times\mathds{1}_{F} \le  \frac{G}{\gamma}\left[\frac{1}{\gamma-\epsilon_1}  + 1\right](\log n)^{-1}.
\end{align*}
From \eqref{eq:auxunion1} we get
\begin{align}
& \mathbb{P}\left( N(\xi_n)\le \lfloor\log \log n\rfloor, \widehat{H}_{N({\xi_n})}\neq 0, \sup_{s\le \xi_n}\left| X(s)-\widehat{X}^{(n)}(s)\right|\le n^{-\gamma+\epsilon_1}\right)\nonumber\\
&\quad \le \sum_{\ell=1}^{\lfloor\log \log n\rfloor} \sum_{k=1}^\ell\mathbb{P}\left( N(\xi_n) = \ell,\; \widehat{H}_{k}= 1,\; \sup_{s\le \theta_k}\left| X(s)-\widehat{X}(s)\right|\le n^{-\gamma+\epsilon_1}\right)\nonumber\\
&\quad \le \sum_{\ell=1}^{\lfloor\log \log n\rfloor} \sum_{k=1}^\ell \frac{G}{\gamma}\left[\frac{1}{\gamma-\epsilon_1}  + 1\right](\log n)^{-1}\le \frac{G}{\gamma}\left[\frac{1}{\gamma-\epsilon_1}  + 1\right]\frac{\lfloor\log \log n\rfloor^2}{\log n}=o(1),\label{eq:bounds_aux3}
\end{align}
which proves \eqref{eq:aux_result_lemma}. Finally, following the same set inclusions in \eqref{eq:auxunion1}, we get
\begin{align*}
& \mathbb{P}\left( N(\xi_n)\le \lfloor\log \log n\rfloor, \widehat{H}_{N({\xi_n})}\neq 0, \sup_{s\le \xi_n}\left| X(s)-\tilde{X}^{(n)}(s)\right|\le n^{-\gamma+\epsilon_1}\right)\\
&\quad \le \sum_{\ell=1}^{\lfloor\log \log n\rfloor} \sum_{k=1}^\ell\mathbb{P}\left( N(\xi_n) = \ell,\; \widehat{H}_{k}= 1,\; \sup_{s\le \theta_k}\left| X(s)-\tilde{X}(s)\right|\le n^{-\gamma+\epsilon_1}\right);
\end{align*}
since on $\{\widehat{H}_k=1\}$ the paths $\{\tilde{X}\}_{s\le\theta_k}$ and $\{\widehat{X}\}_{s\le\theta_k}$ are the same, then 
\begin{align*}
& \mathbb{P}\left( N(\xi_n) = \ell,\; \widehat{H}_{k}= 1,\; \sup_{s\le \theta_k}\left| X(s)-\tilde{X}(s)\right|\le n^{-\gamma+\epsilon_1}\right)\\\quad& = \mathbb{P}\left( N(\xi_n) = \ell,\; \widehat{H}_{k}= 1,\; \sup_{s\le \theta_k}\left| X(s)-\widehat{X}(s)\right|\le n^{-\gamma+\epsilon_1}\right),
\end{align*}
so the result for the case $\tilde{X}$ follows by identical steps to those in \eqref{eq:bounds_aux3}.

\end{proof}
Now we are ready to prove the main result of the paper.
\begin{theorem}\label{th:mainmain}
For any fixed $\epsilon_1>0$,
\begin{align}
\mathbb{P}\left( \left\{\sup_{s\le \xi_n}\left| X(s)-\widehat{X}^{(n)}(s)\right|> n^{-\gamma+\epsilon_1}\right\}\cup\left\{J(s)\neq \widehat{J}^{(n)}(s)\mbox{ for some }s\le \xi_n\right\}\right)=o(1)\quad\mbox{as}\quad n\rightarrow\infty.
\end{align}
\end{theorem}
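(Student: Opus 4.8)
The plan is to decompose the event in question using the auxiliary estimates already established, exploiting the crucial fact that $\widehat{X}^{(n)}$ and $\widetilde{X}^{(n)}$ agree on $[0,\theta_\ell]$ precisely on the event that no decoupling has occurred by step $\ell$. Concretely, write $E_n$ for the event whose probability we want to bound. First I would observe that on the complement of $\{N(\xi_n)>\lfloor\log\log n\rfloor\}$ and on $\{\widehat{H}_{N(\xi_n)}=0\}$, the entire trajectories $\{\widehat{X}^{(n)}(s)\}_{s\le\xi_n}$ and $\{\widetilde{X}^{(n)}(s)\}_{s\le\xi_n}$ coincide, and likewise $\widehat{J}^{(n)}(s)=J(s)$ for all $s\le\xi_n$. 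Hence on the event $\{N(\xi_n)\le\lfloor\log\log n\rfloor\}\cap\{\widehat{H}_{N(\xi_n)}=0\}$ the occurrence of $E_n$ forces $\sup_{s\le\xi_n}|X(s)-\widetilde{X}^{(n)}(s)|>n^{-\gamma+\epsilon_1}$, whose probability is $o(1)$ by Theorem~\ref{th:tildeXmain1}.

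Next I would handle the remaining part of $E_n$, namely the intersection with $\{N(\xi_n)\le\lfloor\log\log n\rfloor\}\cap\{\widehat{H}_{N(\xi_n)}\neq 0\}$. Split this further according to whether $\sup_{s\le\xi_n}|X(s)-\widehat{X}^{(n)}(s)|\le n^{-\gamma+\epsilon_1}$ or not. On the sub-event where this supremum is $\le n^{-\gamma+\epsilon_1}$ but $\widehat{H}_{N(\xi_n)}\neq 0$, Lemma~\ref{lem:auxXhat1} gives probability $o(1)$ directly. On the sub-event where the supremum exceeds $n^{-\gamma+\epsilon_1}$, I would again use the decoupling structure: if additionally $\widehat{H}_{N(\xi_n)}=0$ we are back in the coupled regime and reduce to Theorem~\ref{th:tildeXmain1} as before, while if $\widehat{H}_{N(\xi_n)}\neq 0$ we need the $\widetilde{X}^{(n)}$-version of Lemma~\ref{lem:auxXhat1} — stated explicitly in that lemma — applied to control $\sup_{s\le\xi_n}|X(s)-\widetilde{X}^{(n)}(s)|$, combined once more with Theorem~\ref{th:tildeXmain1}. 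Finally, the Poisson tail estimate \eqref{eq:Poissontail1} disposes of $\{N(\xi_n)>\lfloor\log\log n\rfloor\}$, and the event $\{J(s)\neq\widehat{J}^{(n)}(s)\text{ for some }s\le\xi_n\}$ is, on $\{N(\xi_n)\le\lfloor\log\log n\rfloor\}$, contained in $\{\widehat{H}_{N(\xi_n)}\neq 0\}$ by the construction \eqref{eq:constr4a0}-\eqref{eq:constr4a}, so it is absorbed into the cases already treated.

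Assembling, $E_n\subseteq\{N(\xi_n)>\lfloor\log\log n\rfloor\}\cup\big(\{N(\xi_n)\le\lfloor\log\log n\rfloor\}\cap\{\widehat{H}_{N(\xi_n)}=0\}\cap\{\sup_{s\le\xi_n}|X-\widetilde{X}^{(n)}|>n^{-\gamma+\epsilon_1}\}\big)\cup\big(\{N(\xi_n)\le\lfloor\log\log n\rfloor\}\cap\{\widehat{H}_{N(\xi_n)}\neq 0\}\cap\{\sup_{s\le\xi_n}|X-\widehat{X}^{(n)}|\le n^{-\gamma+\epsilon_1}\}\big)\cup\big(\{N(\xi_n)\le\lfloor\log\log n\rfloor\}\cap\{\widehat{H}_{N(\xi_n)}\neq 0\}\cap\{\sup_{s\le\xi_n}|X-\widetilde{X}^{(n)}|>n^{-\gamma+\epsilon_1}\}\big)$, and each of the four pieces is $o(1)$ by \eqref{eq:Poissontail1}, Theorem~\ref{th:tildeXmain1}, Lemma~\ref{lem:auxXhat1}, and Theorem~\ref{th:tildeXmain1} again (via the $\widetilde{X}^{(n)}$-clause of Lemma~\ref{lem:auxXhat1}), respectively. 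A union bound finishes the proof.

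The main obstacle I anticipate is purely bookkeeping: making the set-theoretic decomposition airtight, in particular verifying that on the coupled event $\{\widehat{H}_{N(\xi_n)}=0\}\cap\{N(\xi_n)\le\lfloor\log\log n\rfloor\}$ one genuinely has pathwise equality $\widehat{X}^{(n)}\equiv\widetilde{X}^{(n)}$ and $\widehat{J}^{(n)}\equiv J$ on all of $[0,\xi_n]$ (not merely up to the last jump epoch $\theta_{N(\xi_n)}$), which requires noting that after $\theta_{N(\xi_n)}$ no further switching decision is taken before $\xi_n$ and the concatenated SDE solutions $\widehat{X}^{[\widehat i_\ell,\widehat x_\ell,\theta_\ell]}$ and $\widehat{X}^{[i_\ell,\widetilde x_\ell,\theta_\ell]}$ coincide because the driving data $(\widehat i_{N(\xi_n)},\widehat x_{N(\xi_n)})=(i_{N(\xi_n)},\widetilde x_{N(\xi_n)})$ agree. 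Everything else reduces to a finite union bound over at most $\lfloor\log\log n\rfloor$ epochs and the already-proven quantitative estimates.
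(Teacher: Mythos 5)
Your overall route is the same as the paper's: split along the Poisson-tail event $F_3=\{N(\xi_n)>\lfloor\log\log n\rfloor\}$, the coupled regime $\{\widehat{H}_{N(\xi_n)}=0\}$ (on which $\widehat{X}^{(n)}\equiv\tilde{X}^{(n)}$ and $\widehat{J}^{(n)}\equiv J$ on all of $[0,\xi_n]$, so everything reduces to Theorem \ref{th:tildeXmain1}), and the decoupled regime $F_2=\{\widehat{H}_{N(\xi_n)}\neq 0\}$, which is handled through Lemma \ref{lem:auxXhat1}; the paper's bound is exactly $\mathbb{P}(\tilde F_1)+\mathbb{P}(\tilde F_1^c\cap F_2\cap F_3^c)+\mathbb{P}(F_3)$ with $\tilde F_1=\{\sup_{s\le\xi_n}|X(s)-\tilde X^{(n)}(s)|>n^{-\gamma+\epsilon_1}\}$. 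Your prose also contains the correct dichotomy for the decoupled regime: either $\tilde F_1$ occurs (Theorem \ref{th:tildeXmain1}) or $\tilde F_1^c$ occurs (the $\tilde X^{(n)}$-clause of Lemma \ref{lem:auxXhat1}).

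However, the final assembled inclusion is not valid as written. Your fourth piece is $F_3^c\cap F_2\cap\tilde F_1$, so an outcome with $N(\xi_n)\le\lfloor\log\log n\rfloor$, $\widehat H_{N(\xi_n)}\neq 0$, $\sup_{s\le\xi_n}|X(s)-\widehat X^{(n)}(s)|>n^{-\gamma+\epsilon_1}$ but $\sup_{s\le\xi_n}|X(s)-\tilde X^{(n)}(s)|\le n^{-\gamma+\epsilon_1}$ --- perfectly possible, since after a decoupling $\widehat X^{(n)}$ and $\tilde X^{(n)}$ need no longer coincide --- belongs to the event of the theorem but to none of your four sets. The fix is what your own prose suggests: the fourth piece should be $F_3^c\cap F_2\cap\{\sup_{s\le\xi_n}|X(s)-\tilde X^{(n)}(s)|\le n^{-\gamma+\epsilon_1}\}$, i.e.\ precisely the event controlled by the $\tilde X^{(n)}$-version of Lemma \ref{lem:auxXhat1} (and the justification you attach to the fourth piece should then be that lemma, not Theorem \ref{th:tildeXmain1}; your third piece, the $\widehat X^{(n)}$-version, becomes redundant). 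With this correction the decomposition collapses to $\tilde F_1\cup(\tilde F_1^c\cap F_2\cap F_3^c)\cup F_3$, which is exactly the paper's argument, and the union bound with Theorem \ref{th:tildeXmain1}, Lemma \ref{lem:auxXhat1} and \eqref{eq:Poissontail1} finishes the proof. Your observation that pathwise equality in the coupled regime holds on all of $[0,\xi_n]$, and not merely up to $\theta_{N(\xi_n)}$, is correct and indeed needed.
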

\begin{proof}
Define the events 
\begin{align*}
&\tilde{F}_1:=\left\{ \sup_{s\le \xi_n}\left| X(s)-\tilde{X}^{(n)}(s)\right|> n^{-\gamma+\epsilon_1} \right\},\quad \widehat{F}_1:=\left\{ \sup_{s\le \xi_n}\left| X(s)-\widehat{X}^{(n)}(s)\right|> n^{-\gamma+\epsilon_1} \right\}\\
 &F_2:=\left\{ \widehat{H}_{N(\xi_n)} \neq 0 \right\},\quad F_3:=\left\{ N(\xi_n)> \lfloor\log \log n\rfloor \right\}.\end{align*}
Given that on the event $F_2^c$ the paths $\{\widehat{X}^{(n)}(s)\}_{s\le\xi_n}$ and $\{\tilde{X}^{(n)}(s)\}_{s\le\xi_n}$ are identical, then
\[\tilde{F}_1\cap F_2^c=\widehat{F}_1\cap F_2^c.\]
Taking this into account, by standard set inclusion/exclusion operations we get
\begin{align}
\widehat{F}_1\cup F_2 = (\widehat{F}_1\cap F_2^c)\cup F_2 = (\tilde{F}_1\cap F_2^c)\cup F_2 = \tilde{F}_1\cup F_2 = \tilde{F}_1\cup (\tilde{F}_1^c\cap F_2).\label{eq:auxsets1}
\end{align}
Employing \eqref{eq:auxsets1} and additional standard set operations lead to the inclusions
\begin{align*}
\widehat{F}_1\cup F_2 & = \{(\widehat{F}_1\cup F_2)\cap F_3^c\}\cup \{(\widehat{F}_1\cup F_2)\cap F_3\}\\
& \subseteq \{(\widehat{F}_1\cup F_2)\cap F_3^c\}\cup F_3\\
& = \{(\tilde{F}_1\cup (\tilde{F}_1^c\cap F_2))\cap F_3^c\}\cup F_3\\
& = (\tilde{F}_1\cap  F_3^c)\cup (\tilde{F}_1^c\cap F_2\cap F_3^c)\cup F_3\\
& \subseteq \tilde{F}_1\cup (\tilde{F}_1^c\cap F_2\cap F_3^c)\cup F_3
\end{align*}
which in turn implies that
\begin{align}
\mathbb{P}&\left( \sup_{s\le \xi_n}\left| X(s)-\widehat{X}^{(n)}(s)\right|> n^{-\gamma+\epsilon_1}\cup \left\{J(s)\neq \widehat{J}^{(n)}(s)\mbox{ for some }s\le \xi_n\right\}\right)\nonumber\\
&\hspace{4.5cm}\le \mathbb{P}(\widehat{F}_1\cup F_2)
\le \mathds{P}(\tilde{F}_1) + \mathds{P}(\tilde{F}_1^c\cap F_2\cap F_3^c) + \mathds{P}(F_3).\label{eq:setaux3}
\end{align}
Since each one of the summands on the r.h.s. of \eqref{eq:setaux3} are shown to be $o(1)$ as $n\rightarrow \infty$ in Theorem \ref{th:tildeXmain1}, Lemma \ref{lem:auxXhat1} and \eqref{eq:setaux3}, respectively, the result follows.
\end{proof}

Theorem \ref{th:mainmain} essentially tells us that under Assumptions \ref{ass:Lipschitz1}-\ref{ass:intensities}, there exists a sequence of multi-regime Markov-modulated Brownian motions $\widehat{X}^{(n)}$ that converge in probability to the solution $X$ of a hybrid stochastic differential equation, as well as their respective underlying processes $\widehat{J}^{(n)}$ and $J$; such a convergence occurs in a uniform sense over increasing compact intervals. In particular, this implies that any first passage times of $X$, say into the set $(-\infty,a)$ or $(a,+\infty)$, can be approximated by the corresponding first passage times of $\widehat{X}^{(n)}$. In the next section, we exploit this to provide efficient approximations to certain first passage probabilities for hybrid stochastic differential equations.

\section{First passage probabilities and expected occupation times for solutions of hybrid SDEs}\label{sec:approx}

Define the first passage times $\tau_0^- = \inf\{t>0: X(t)<0\}$, $\tau_a^+ = \inf\{t>0: X(t)>a\}$ ($a> 0$). For $q\ge 0$, we are interested in providing approximations for the Laplace transform of the first passage times,
\[\psii^-_{ij}(u,q,a)=\mathds{E}(e^{-q(\tau_0^-\wedge\tau_a^+)}\mathds{1}\left\{J(\tau_0^-\wedge\tau_a^+)=j, X(\tau_0^-\wedge\tau_a^+)=0\right\}\,\mid\, J(0)=i, X(0)=u),\]
\[\psii^+_{ij}(u,q,a)=\mathds{E}(e^{-q(\tau_0^-\wedge\tau_a^+)}\mathds{1}\left\{J(\tau_0^-\wedge\tau_a^+)=j, X(\tau_0^-\wedge\tau_a^+)=a\right\}\,\mid\, J(0)=i, X(0)=u),\]
for $0< u < a\le \infty$ and $i,j\in\mathcal{E}$. In essence, the functions $\psii^+_{ij}$ and $\psii^-_{ij}$ characterize the distributional law of the exit times of $X$ from the band $[0,a]$. Employing the law of total probability, it is straightforward to reinterpret $\psii^-_{ij}$ and $\psii^-_{ij}$ as
\[\psii^-_{ij}(u,q,a)=\mathds{P}(\tau_0^-\wedge\tau_a^+<e_q, J(\tau_0^-\wedge\tau_a^+)=j, X(\tau_0^-\wedge\tau_a^+)=0\,\mid\, J(0)=i, X(0)=u),\]
\[\psii^+_{ij}(u,q,a)=\mathds{P}(\tau_0^-\wedge\tau_a^+<e_q, J(\tau_0^-\wedge\tau_a^+)=j, X(\tau_0^-\wedge\tau_a^+)=a\,\mid\, J(0)=i, X(0)=u),\]
where $e_q$ is an $\mbox{Exp}(q)$ random variable independent of everything else; from now on, we adopt this reinterpretation. 

In the existing literature, explicit solutions to $\psii^+_{ij}$ and $\psii^-_{ij}$ only exist in very simple scenarios: either $q=0$ and $\mathcal{E}=\{1\}$ (see e.g. \cite[Chapter 4]{ito2012diffusion}), or $\mu_i$ and $\sigma_i$ are constant for all $i\in\mathcal{E}$ (see e.g. \cite{ivanovs2010markov}). Moreover, we are as well interested in the present value of the expected ocupation times in State $j\in\mathcal{E}$ while below level $b\in (0,a)$, defined as
\[O_{ij}(u,q,a,b)=\mathds{E}\left(\int_{0}^{\tau_0^-\wedge\tau_a^+}e^{-qs}\mathds{1}\left\{J(s)=j,\, 0 < X(s) \le b\right\}\dd s\,\mid\, J(0)=i, X(0)=u\right).\]
Similarly to $\psii^-_{ij}$ and $\psii^+_{ij}$, here we reinterpret $O_{ij}$ as 
\[O_{ij}(u,q,a,b)=\mathds{E}\left(\int_{0}^{\tau_0^-\wedge\tau_a^+\wedge e_q}\mathds{1}\left\{J(s)=j,\, 0 < X(s) \le b\right\}\dd s\,\mid\, J(0)=i, X(0)=u\right).\]

In this section we propose to exploit the approximations laid out in Theorem \ref{th:mainmain}, along with the existing theory for multi-regime Markov-modulated Brownian motions, to provide an efficient approximation scheme based on computing
\[\widehat{\psii}^-_{ij}(u,q,a)=\mathds{P}(\widehat{\tau}^-_0\wedge\widehat{\tau}^+_a<e_q, \widehat{J}(\widehat{\tau}^-_0\wedge\widehat{\tau}_a^+)=j, \widehat{X}(\widehat{\tau}^-_0\wedge\widehat{\tau}_a^+)=0\,\mid\,\widehat{J}(0)=i, \widehat{X}(0)=u),\]
\[\widehat{\psii}^+_{ij}(u,q,a)=\mathds{P}(\widehat{\tau}^-_0\wedge\widehat{\tau}^+_a<e_q, \widehat{J}(\widehat{\tau}^-_0\wedge\widehat{\tau}_a^+)=j, \widehat{X}(\widehat{\tau}^-_0\wedge\widehat{\tau}_a^+)=a\,\mid\,\widehat{J}(0)=i, \widehat{X}(0)=u),\]
\[\widehat{O}_{ij}(u,q,a,b)=\mathds{E}\left(\int_{0}^{\widehat{\tau}_0^-\wedge\widehat{\tau}_a^+\wedge e_q}\mathds{1}\left\{\widehat{J}(s)=j,\, 0<\widehat{X}(s) \le b\right\}\dd s\,\mid\,\widehat{J}(0)=i, \widehat{X}(0)=u\right),\]
where $\widehat{\tau}_0^- = \inf\{t>0:\widehat{X}(t)<0\}$ and $\widehat{\tau}_a^+ = \inf\{t>0:\widehat{X}(t)>a\}$. Indeed, the convergence in probability over increasing compact intervals of Theorem \ref{th:mainmain} yields the convergence in probability of $\widehat{\tau}_0^-$ and $\widehat{\tau}_a^+$ to $\tau_0^-$ and $\tau_a^+$, respectively. Due to the continuity of paths of $\widehat{X}$ and $X$, this readily translates to the pointwise convergence of the functions $\widehat{\psii}^-_{ij}$ and $\widehat{\psii}^+_{ij}$ to ${\psii}^-_{ij}$ and ${\psii}^+_{ij}$. Additionally, under the assumption $q>0$, the dominated convergence theorem implies the convergence of $\widehat{O}_{ij}$ to $O_{ij}$.

While the functions $\widehat{\psii}^-_{ij}$ and $\widehat{\psii}^+_{ij}$ are not readily available in the existing literature, we can compute them by embedding independent copies of the excursion $(\widehat{J}_*, \widehat{X}_*):=\{(\widehat{J}(t),\widehat{X}(t)) : t\le \widehat{\tau}_0^-\wedge \widehat{\tau}_a^+\wedge e_q\}$ into a certain recurrent \emph{queue} on the strip $[0,a]$ (that is, a process reflected on the level boundaries $0$ and $a$). This procedure is similar to the ones exploited in \cite{van2005approximated,yazici2017finite,akar2021transient} to provide finite-time ruin probabilities for multi-regime Markov-modulated risk processes or its subclasses. Below we spell out the details of our construction.

Let $\{(\widehat{J}^{\{\ell\}}_*, \widehat{X}^{\{\ell\}}_*)\}_{\ell\ge 1}$ be a sequence of independent copies of $(\widehat{J}_*, \widehat{X}_*)$. Here we assume that the total length of each excursion $\widehat{X}^{\{\ell\}}_*$, say $T^{\{\ell\}}$, has finite mean; this trivially holds if $q>0$, or if either $\widehat{\tau}_0^-$ or $\widehat{\tau}_a^+$ have a finite mean. Additionally, let our probability space support three independent sequences of $\mbox{Exp}(1)$-distributed i.i.d. random variables $\{\rhoa^{\{\ell\}}\}$, $\{\rhob^{\{\ell\}}\}$ and $\{\rhoc^{\{\ell\}}\}$. Based on the construction of multi-regime queues in \cite{horvath2017matrix,akar2021transient}, here we define $Y$ on a space grid $\{\zeta_m\}_{-M\le m\le M}$ (with $\zeta_{-M}=0$, $\zeta_{0}=u$ and $\zeta_M=a$) modulated by the jump process $L$ that has a state space $\mathcal{E}\cup\{\partial_0\}$, both of which evolve as follows:
\begin{enumerate}
\item On the time interval $[0,\Ta^{\{1\}})$ with $\Ta^{\{1\}}=T^{\{1\}}$, let $(L,Y)$ coincide with $(\widehat{J}^{\{\ell\}}_*,\widehat{X}^{\{1\}}_*)$.
\item At time $\Ta^{\{1\}}$, one of three events happens:
\begin{itemize}
	\item If $\widehat{X}^{\{1\}}(\Ta^{\{1\}})=0$, let $\Tb^{\{1\}} = \Ta^{\{1\}} + \rhoa^{\{1\}}$ and $(Y(t)=0,L(t)=L(\Ta^{\{1\}}-))$ for all $t\in [\Ta^{\{1\}}, \Tb^{\{1\}})$.
	\item If $\widehat{X}^{\{1\}}(\Ta^{\{1\}})=a$, let $\Tb^{\{1\}} = \Ta^{\{1\}} + \rhoc^{\{1\}}$ and $(Y(t)=a,L(t)=L(\Ta^{\{1\}}-))$ for all $t\in [\Ta^{\{1\}}, \Tb^{\{1\}})$.
	\item If $\widehat{X}^{\{1\}}(\Ta^{\{1\}})\in(0,a)$, let $\Tb^{\{1\}} = \Ta^{\{1\}}$.
	\end{itemize}
\item At time $\Tb^{\{1\}}$, one of two events happens:
\begin{itemize}
	\item If $Y(\Tb^{\{1\}})\le u$, let $Y$ increase at unit rate up to reaching level $u$, say, at time $\Tc^{\{1\}}$. Let $L(t)=\partial_0$ for all $t\in[\Tb^{\{1\}}, \Tc^{\{1\}})$.
	\item If $Y(\Tb^{\{1\}})> u$, let $Y$ decrease at unit rate up to reaching level $u$, say, at time $\Tc^{\{1\}}$. Let $L(t)=\partial_0$ for all $t\in[\Tb^{\{1\}}, \Tc^{\{1\}})$.
	\end{itemize}
\item Let $P^{\{1\}}_*=\Tc^{\{1\}} + \rhob{\{1\}}_u$ and $(Y(t)=u,L(t)=\partial_0)$ for all $t\in [\Tc^{\{1\}}, \Td^{\{1\}})$.
\item Repeat Steps (1)-(4) for $\ell=2,3,\dots$ shifting the time accordingly in order to concatenate the excursions along with the increasing sequences $\{\nu_i^{\{\ell\}}\}_{\ell}$, $i\in\{1,2,3,4\}$. This produces a process $(L,Y)$ that regenerates at the epochs $\{\Td^{\{\ell\}}\}_{\ell\ge 1}$.
\end{enumerate}
See Figure \ref{fig:MRMMBM1} for a graphic description of the aforementioned construction. 
\begin{figure}[htbp]
\centerline{\includegraphics[scale=0.45]{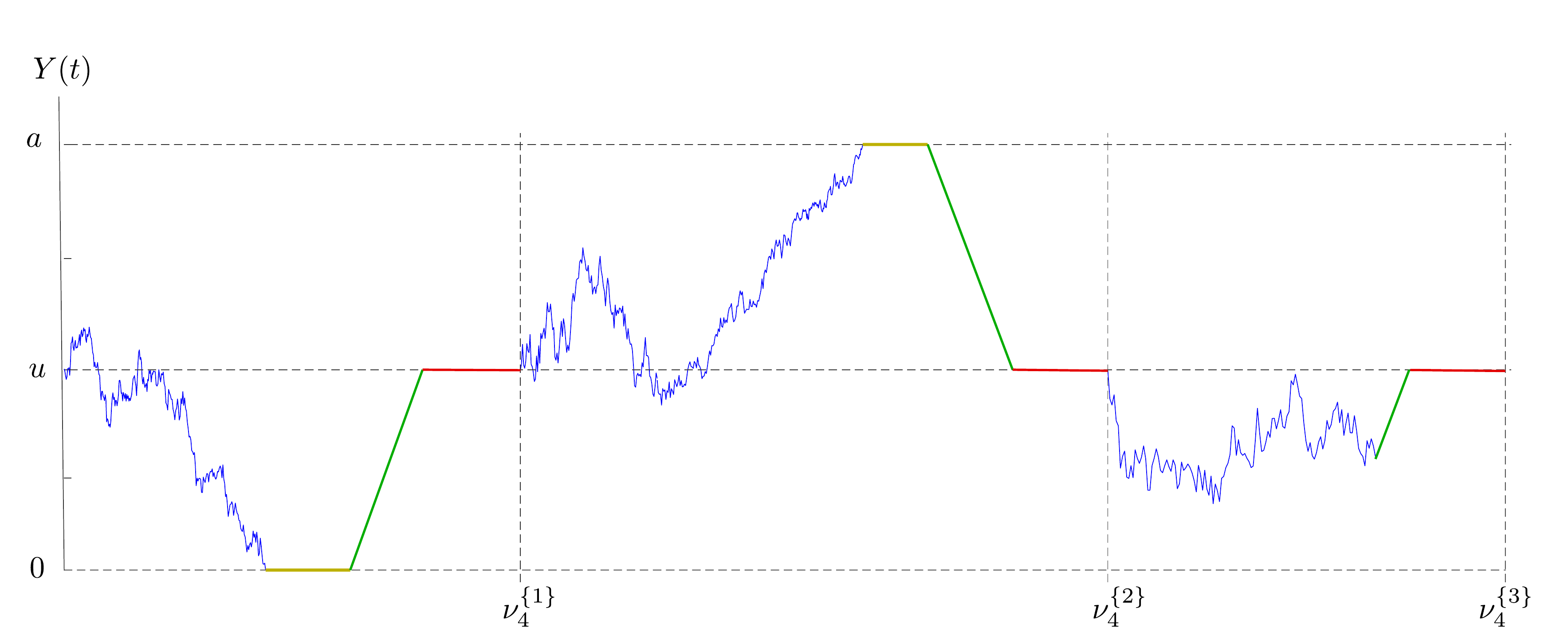}}
\caption{Queue model resulting from concatenating $\{ \widehat{X}^{\{\ell\}}_*\}_{\ell\ge 1}$ using Steps (1)-(4), which are shown in colors blue, mustard, green and red, respectively. }
\label{fig:MRMMBM1}
\end{figure}
As pointed out earlier, the process $(L,Y)$ falls within the class of multi-regime Markov-modulated Brownian motion queueing models (see \cite{horvath2017matrix,akar2021transient} for details), which has a law characterized by the level-dependent matrices $(Q(x), R(x), S(x))_{0\le x\le a}$ with the following interpretation:
\begin{itemize}
	\item $Q_{ij}(x)$ is the jump intensity of $L$ from $i$ to $j$ while $Y$ is in level $x$. We further specify the dependence on the level $x$ by employing intensity matrices $\{Q^{(m)}\}_{-M+1\le m \le  M}$ and $\{\tilde{Q}^{(m)}\}_{-M\le m \le  M}$ where
	\[Q(x)=\left\{\begin{array}{ccc}
	Q^{(m)}& \mbox{for} & \zeta_{m-1} < x <\zeta_m,\\
	\tilde{Q}^{(m)}&\mbox{for}& x= \zeta_m.
	\end{array}\right.\]
	\item $R_{ii}(x)$ is the drift of $Y$ at level $x$ while $L$ is in State $i$ (by convention we let $R_{ij}(x)=0$ for all $i\neq j$). We further specify this dependence on the level $x$ by employing diagonal matrices $\{R^{(m)}\}_{-M+1\le m \le  M}$ and $\{\tilde{R}^{(m)}\}_{-M\le m \le  M}$ where
	\[R(x)=\left\{\begin{array}{ccc}
	R^{(m)}& \mbox{for} & \zeta_{m-1} < x <\zeta_m,\\
	\tilde{R}^{(m)}&\mbox{for}& x= \zeta_m.
	\end{array}\right.\]
	\item $S_{ii}(x)$ is the diffusion coefficient of $Y$ at level $x$ while $L$ is in State $i$ (by convention we let $S_{ij}(x)=0$ for all $i\neq j$). We further specify the dependence on the level $x$ by employing nonnegative diagonal matrices $\{S^{(m)}\}_{-M+1\le m \le  M}$ where
	\[S(x)=S^{(m)}\quad\mbox{for}\quad\zeta_{m-1} \le x <\zeta_m.\]
	\end{itemize}

	Under the previous considerations, the corresponding matrices for our particular model with state space $\mathcal{E}\cup\{\partial_0\}$ take the form 
	\begin{align*}
		Q^{(m)}&=\begin{pmatrix}\widehat{\Lambda}(\zeta_{m-1})-q\bm{I}& q\bm{1} \\ \bm{0} & 0\end{pmatrix},\quad m\in \{-M+1, -M+2,\dots, M\},\\
	\tilde{Q}^{(m)}&=\begin{pmatrix}\widehat{\Lambda}(\zeta_{m})-q\bm{I}& q\bm{1}\\ \bm{0}& 0 \end{pmatrix},\quad m\in \{-M+1, -M+2,\dots, M-1\}\setminus\{0\},\\
	\tilde{Q}^{(-M)}&=\tilde{Q}^{(M)}=\begin{pmatrix}-\bm{I}& \bm{1} \\ \bm{0} & 0\end{pmatrix},\quad \tilde{Q}^{(0)}=\begin{pmatrix}\widehat{\Lambda}(\zeta_{0})-q\bm{I}& q\bm{1}\\ \bm{e}^{\intercal}_{i}& -1\end{pmatrix},
	\end{align*}
		\begin{align*}
		R^{(m)}&=\begin{pmatrix}\mbox{diag}\{\widehat{\mu}_i(\zeta_{m-1})\}& \bm{0} \\ \bm{0} & 1\end{pmatrix},\quad m\in \{-M+1, -M+2,\dots, 0\},\\
		R^{(m)}&=\begin{pmatrix}\mbox{diag}\{\widehat{\mu}_i(\zeta_{m-1})\}& \bm{0} \\ \bm{0} & -1\end{pmatrix},\quad m\in \{1, 2,\dots, M\},\\
	\tilde{R}^{(m)}&=\begin{pmatrix}\mbox{diag}\{\widehat{\mu}_i(\zeta_{m})\} & \bm{0}\\ \bm{0}& 1 \end{pmatrix},\quad m\in \{-M+1, -M+2,\dots, -1\},\\
	\tilde{R}^{(m)}&=\begin{pmatrix}\mbox{diag}\{\widehat{\mu}_i(\zeta_{m})\} & \bm{0}\\ \bm{0}& -1 \end{pmatrix},\quad m\in \{1, -M+2,\dots, M-1\},\\
	\tilde{R}^{(-M)}&=\begin{pmatrix}\bm{0} & \bm{0}\\ \bm{0}& 1 \end{pmatrix},\quad \tilde{R}^{(M)}=\begin{pmatrix}\bm{0} & \bm{0}\\ \bm{0}& -1 \end{pmatrix},\quad \tilde{R}^{(0)}=\begin{pmatrix}\mbox{diag}\{\widehat{\mu}_i(\zeta_{0})\} & \bm{0}\\ \bm{0}& 0 \end{pmatrix},
	\end{align*}
	\begin{align*}
	S^{(m)}&=\begin{pmatrix}\mbox{diag}\{\widehat{\sigma}_i(\zeta_{m-1})\}& \bm{0} \\ \bm{0} & 0\end{pmatrix},\quad m\in \{-M+1, -M+2,\dots, M\},
	\end{align*}
	where $\bm{e}^{\intercal}_{i}$ denotes the $i$th canonical row vector, and $\mbox{diag}\{a_i\}$ denotes the diagonal matrix with the elements $\{a_1,\dots,a_p\}$ filling the diagonal. In \cite{horvath2017matrix}, the authors provide an efficient algorithm to compute the steady state distribution of $(L,Y)$. In particular, using their method we are able obtain:
  \begin{itemize}
    \item the steady state probability atoms for levels $\zeta_{-M}$ and $\zeta_{M}$ while on the states $\mathcal{E}$ in row vector form, say $\bm{p}_{-M}$ and $\bm{p}_{M}$ where
    \begin{align*}
    (\bm{p}_{-M})_j&=\lim_{t\rightarrow \infty}\mathds{P}(L(t)=j, Y(t)= 0),\\
     (\bm{p}_{M})_j&=\lim_{t\rightarrow \infty}\mathds{P}(L(t)=j, Y(t)= a).
    \end{align*}
    \item the steady state probability atoms for level $\zeta_0$ while on the State $\partial_0$, say $p_0$ where
    \begin{align*}
    p_0&=\lim_{t\rightarrow \infty}\mathds{P}(L(t)=\partial_0, Y(t)= u).
    \end{align*}
    \item the steady state probability distribution function while on State $j\in\mathcal{E}$ over $(0,a)$, say $F_j$ where
    \[F_j(b)=\lim_{t\rightarrow \infty}\mathds{P}(L(t)=j, 0<Y(t)\le b),\quad b\in (0,a).\]
  \end{itemize} Below we link these quantities with the first passage probabilities $\widehat{\psii}^+_{ij}$ and $\widehat{\psii}^-_{ij}$, as well as with the expected occupation times $\widehat{O}_{ij}$.

\begin{theorem}\label{th:firstpas1}
Suppose that $\mathds{E}(\widehat{\tau}_0^-\wedge \widehat{\tau}_a^+\wedge e_q\,\mid\, \widehat{J}(0)=i, \widehat{X}(0)=u)<\infty$. Then,
\begin{equation}\label{eq:formpsis}\widehat{\psii}^-_{ij}(u,q,a)=\frac{(\bm{p}_{-M})_j}{p_0}\quad\mbox{and}\quad\widehat{\psii}^+_{ij}(u,q,a)=\frac{(\bm{p}_{M})_j}{p_0}.\end{equation}
 Moreover,
\begin{equation}\label{eq:formO}\widehat{O}_{ij}(u,q,a,b)=\frac{F_j(b)}{p_0},\quad b\in(0,a).\end{equation}
\end{theorem}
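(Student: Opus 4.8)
The plan is to identify $(L,Y)$ as a \emph{regenerative process} with regeneration epochs $\{\Td^{\{\ell\}}\}_{\ell\ge 1}$ and to express each of $\bm p_{-M},\bm p_{M},p_0,F_j$ as an expected within-cycle occupation time divided by the expected cycle length, via the renewal--reward theorem for regenerative processes. By Steps (1)--(5) the evolution of $(L,Y)$ after $\Td^{\{\ell\}}$ is an i.i.d.\ copy of the evolution from time $0$, so the process regenerates; write $C:=\Td^{\{1\}}$ for the generic cycle length. One first checks $0<\mathds{E}(C)<\infty$: decomposing $C=\Ta^{\{1\}}+(\Tb^{\{1\}}-\Ta^{\{1\}})+(\Tc^{\{1\}}-\Tb^{\{1\}})+(\Td^{\{1\}}-\Tc^{\{1\}})$, the first term has mean $\mathds{E}(\widehat\tau_0^-\wedge\widehat\tau_a^+\wedge e_q\mid\widehat J(0)=i,\widehat X(0)=u)$, finite by hypothesis; the second term is either $0$ or $\mathrm{Exp}(1)$; the third is at most $a$, since $Y$ moves at unit speed inside $[0,a]$; and the fourth equals $\rhob^{\{1\}}\sim\mathrm{Exp}(1)$, which also gives $C>0$ a.s. Since $C$ has $\rhob^{\{1\}}$ as an independent additive summand its law is absolutely continuous, hence non-lattice, and the renewal--reward theorem then gives, for every measurable $A\subseteq(\mathcal E\cup\{\partial_0\})\times[0,a]$,
\[
\lim_{t\to\infty}\mathds{P}\big((L(t),Y(t))\in A\big)=\frac{\mathds{E}\big(\int_0^{C}\mathds 1\{(L(s),Y(s))\in A\}\,\dd s\big)}{\mathds{E}(C)}.
\]

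Next I would evaluate the numerator for the four relevant sets by inspecting Steps (1)--(4) of one cycle. The pair $(\partial_0,u)$ is attained on a set of positive Lebesgue measure only on $[\Tc^{\{1\}},\Td^{\{1\}})$: during Step (3) the level $Y$ passes through $u$ only at the single instant $\Tc^{\{1\}}$, and during Steps (1)--(2) one has $L\in\mathcal E$; hence the reward for $\{(\partial_0,u)\}$ is $\rhob^{\{1\}}\sim\mathrm{Exp}(1)$, so $p_0=1/\mathds{E}(C)$. The level $Y=0$ is held for a positive duration only on $[\Ta^{\{1\}},\Tb^{\{1\}})$ in the first bullet of Step (2), i.e.\ on the event that $\widehat X^{\{1\}}_*$ terminates at $0$, namely $\{\widehat\tau_0^-<\widehat\tau_a^+\wedge e_q\}$, where $L$ is constantly equal to $L(\Ta^{\{1\}}-)=\widehat J^{\{1\}}_*(T^{\{1\}})$ for an independent duration $\rhoa^{\{1\}}\sim\mathrm{Exp}(1)$; recalling that the copy $(\widehat J^{\{1\}}_*,\widehat X^{\{1\}}_*)$ starts from $(i,u)$, the reward for $\{(j,0)\}$ has mean $\mathds{P}(\widehat J_*\text{ ends in }j,\ \widehat X_*\text{ ends at }0)=\widehat{\psii}^-_{ij}(u,q,a)$. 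The same computation with the second bullet of Step (2) gives mean reward $\widehat{\psii}^+_{ij}(u,q,a)$ for $\{(j,a)\}$. Finally, the set $\{(j,x):0<x\le b\}$ collects contributions only from Step (1), since in Step (2) one has $Y\in\{0,a\}$ with $b<a$ and in Steps (3)--(4) one has $L=\partial_0$; thus its mean reward is $\mathds{E}\big(\int_0^{\widehat\tau_0^-\wedge\widehat\tau_a^+\wedge e_q}\mathds 1\{\widehat J_*(s)=j,\,0<\widehat X_*(s)\le b\}\,\dd s\big)=\widehat O_{ij}(u,q,a,b)$.

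Combining, $(\bm p_{-M})_j=\widehat{\psii}^-_{ij}(u,q,a)/\mathds{E}(C)$, $(\bm p_{M})_j=\widehat{\psii}^+_{ij}(u,q,a)/\mathds{E}(C)$, $p_0=1/\mathds{E}(C)$ and $F_j(b)=\widehat O_{ij}(u,q,a,b)/\mathds{E}(C)$; dividing the first, second and fourth identities by $p_0$ cancels the common positive finite factor $\mathds{E}(C)$ and yields \eqref{eq:formpsis} and \eqref{eq:formO}. The step I expect to be the most delicate is the bookkeeping of the second paragraph: one must argue carefully that each target state is entered with positive Lebesgue measure \emph{only} during the single phase identified — in particular that the unit-speed travel phase of Step (3) contributes nothing to $\{(\partial_0,u)\}$, and that the left-limit $L(\Ta^{\{1\}}-)$ coincides a.s.\ with $\widehat J_*(T^{\{1\}})$ since a.s.\ $\widehat J$ does not jump at the excursion's termination time — and that the excursion copies start at $(i,u)$ so that the within-cycle indicator probabilities coincide \emph{exactly} with the definitions of $\widehat{\psii}^{\pm}_{ij}$ and $\widehat O_{ij}$; a minor additional point is verifying the non-lattice property needed to pass from the time-average statement to the genuine $t\to\infty$ limits defining $\bm p_{\pm M}$, $p_0$ and $F_j$.
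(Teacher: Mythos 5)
Your proposal is correct and follows essentially the same route as the paper: both treat $(L,Y)$ as a regenerative process with cycles ending at $\nu_4^{\{\ell\}}$ and apply the renewal--reward (ratio) formula for positive recurrent regenerative processes (the paper cites Glynn's theorem for this), identifying the within-cycle rewards for the atoms at $0$, $a$, $(\partial_0,u)$ and the band $(0,b]$ with $\widehat{\psii}^-_{ij}$, $\widehat{\psii}^+_{ij}$, $1$ and $\widehat{O}_{ij}$, respectively, and then cancelling $\mathds{E}(\nu_4^{\{1\}})$ by dividing by $p_0$. Your extra care about finiteness of the cycle length and the non-lattice/bookkeeping details is consistent with, and slightly more explicit than, the paper's argument.
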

\begin{proof}
The process $(L,Y)$ regenerates at the epochs $\{\Td^{\{\ell\}}\}_{\ell\ge 1}$, all of which have a (common) finite first moment. This in turn implies that $(L,Y)$ is positive recurrent, so by \cite[Theorem 1]{glynn1994some},
\begin{align*}
\bm{p}_M\bm{1} &= \frac{\mathds{E}\left(\int_0^{\Td^{\{1\}}}\mathds{1}\{Y(s)=u, L(s)\in \mathcal{E}\}\dd s\right)}{\mathds{E}\left(\Td^{\{1\}}\right)}\\
& = \frac{\mathds{P}(Y(\Ta^{\{1\}})=u, L(\Ta^{\{1\}})\in \mathcal{E})\mathds{E}\left(\rhob^{\{1\}}\right)}{\mathds{E}\left(\Td^{\{1\}}\right)} =\frac{\widehat{\psii}^+(u,q,a)}{\mathds{E}\left(\Td^{\{1\}}\right)},
\end{align*}
where we used that $\mathds{E}\left(\rhob^{\{1\}}\right)=1$.
Employing similar arguments we get
\begin{align*}
\bm{p}_{-M}\bm{1} & =\frac{\widehat{\psii}^-(u,q,a)}{\mathds{E}\left(\Td^{\{1\}}\right)}\quad\mbox{and}\quad p_0=\frac{1}{\mathds{E}\left(\Td^{\{1\}}\right)},
\end{align*}
so that \eqref{eq:formpsis} follows. Additionally,
\begin{align*}
F_j(b) & = \frac{\mathds{E}\left(\int_0^{\Td^{\{1\}}}\mathds{1}\{ L(s)=j,\, 0<Y(s)\le b \}\dd s\right)}{\mathds{E}\left(\Td^{\{1\}}\right)}\\
&= \frac{\mathds{E}\left(\int_{0}^{\widehat{\tau}_0^-\wedge\widehat{\tau}_a^+\wedge e_q}\mathds{1}\left\{\widehat{J}(s)=j,\, 0<\widehat{X}(s) \le b\right\}\dd s\right)}{\mathds{E}\left(\Td^{\{1\}}\right)} = \frac{\widehat{O}_{j,b}(u,q,a)}{\mathds{E}\left(\Td^{\{1\}}\right)},
\end{align*}
which in turn implies \eqref{eq:formO}.
\end{proof}
In short, with the help of the algorithms developed in \cite{horvath2017matrix}, one is able to efficiently compute first passage probabilities and their expected occupation times for $(\widehat{J},\widehat{X})$ using Theorem \ref{th:firstpas1}, which in turn approximates the first passage probabilities and their expected occupation times for $(J,X)$ by virtue of Theorem \ref{th:mainmain}. Below we explore a couple of synthetic examples.

\begin{example}\label{ex:hybrid1} 
\normalfont
Consider a three-state hybrid SDE $(X,J)$ evolving in the space-band $(0,1)$ with parameters
\begin{align*}
&\mu_1(x)= 0.5,\quad\mu_2(x)=0.5(1-x),\quad \mu_3(x)=0.5(1-x)^2,\\
&\sigma_1=\sigma_2=\sigma_3=1.
\end{align*}
In essence, the process $X$ has a force pushing it upwards while $J$ is in any of the three states. The force is weakest while in State $3$ and strongest while in $1$; note that the difference between regimes is accentuated as $X$ gets closer to level $1$. In the context of actuarial science, this may e.g. describe the case of the surplus process of an insurance company with three possibly occurring regimes, all with positive drift, but in some being more responsive with premium reductions when the surplus approaches $1$. 
This can be described, for instance, by an intensity matrix function of the form
\[\bm{\Lambda}(x)=10\times\begin{pmatrix}- x & x & 0 \\ (1-x) & -1  & x\\ 0 &  1-x & -(1-x)\end{pmatrix}.\]
Employing the Matlab coded provided in \url{http://www.hit.bme.hu/~ghorvath} based on \cite{horvath2017matrix} together with Theorem \ref{th:firstpas1}, we are able to compute the first passage probabilities and expected occupation times of the multi-regime Markov-modulated Brownian motion $(\widehat{J},\widehat{X})$ that approximates $(J,X)$. We perform such computations for $q=0$, $a=1$, $i=2$ and $M=50$, with results shown in Figure \ref{fig:Ex1UandOccup}.
\begin{figure}[h!]
  \centering
  \begin{subfigure}[b]{0.45\textwidth}
        \includegraphics[width=\textwidth]{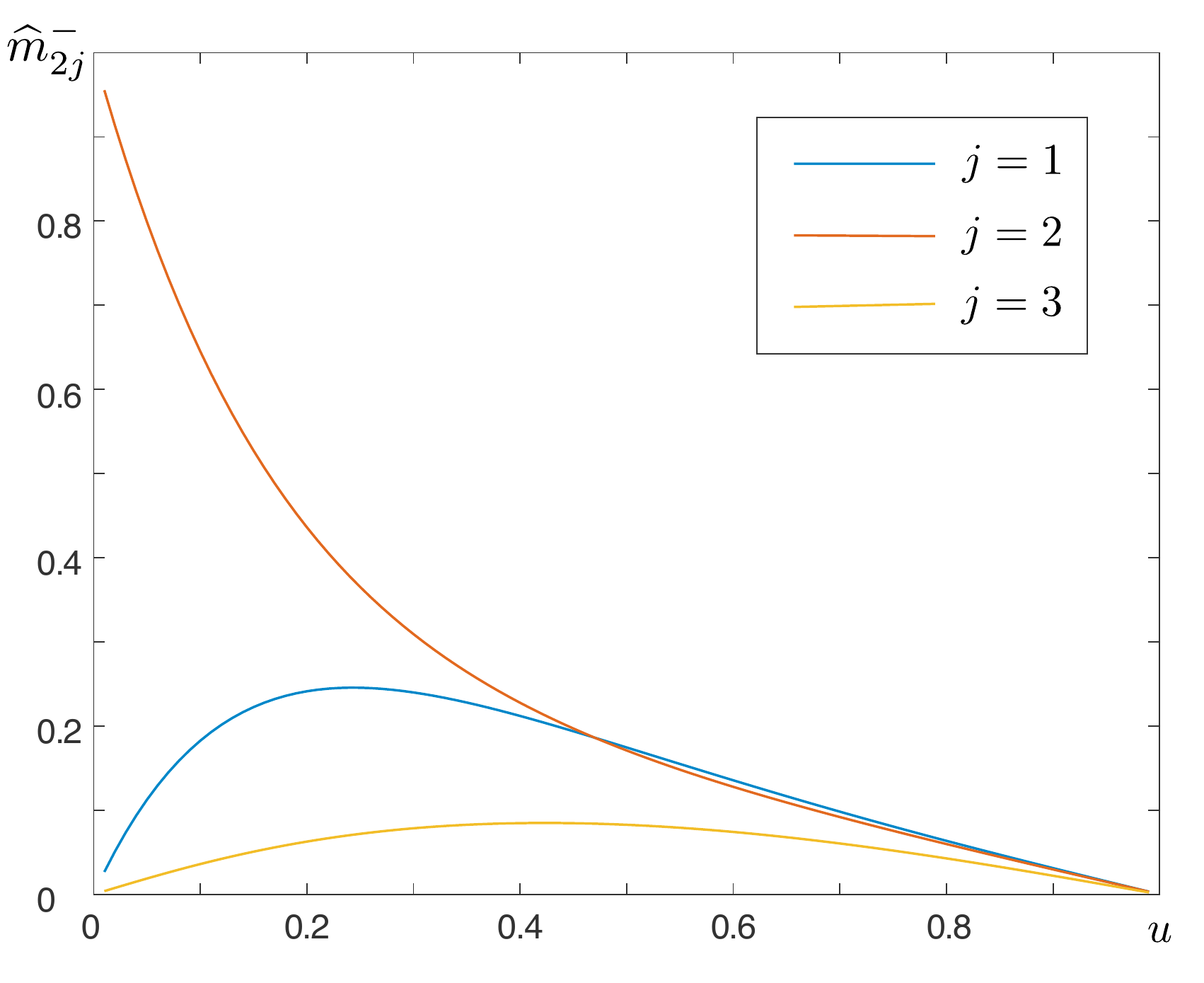}
    \end{subfigure}\qquad
    \begin{subfigure}[b]{0.45\textwidth}
        \includegraphics[width=\textwidth]{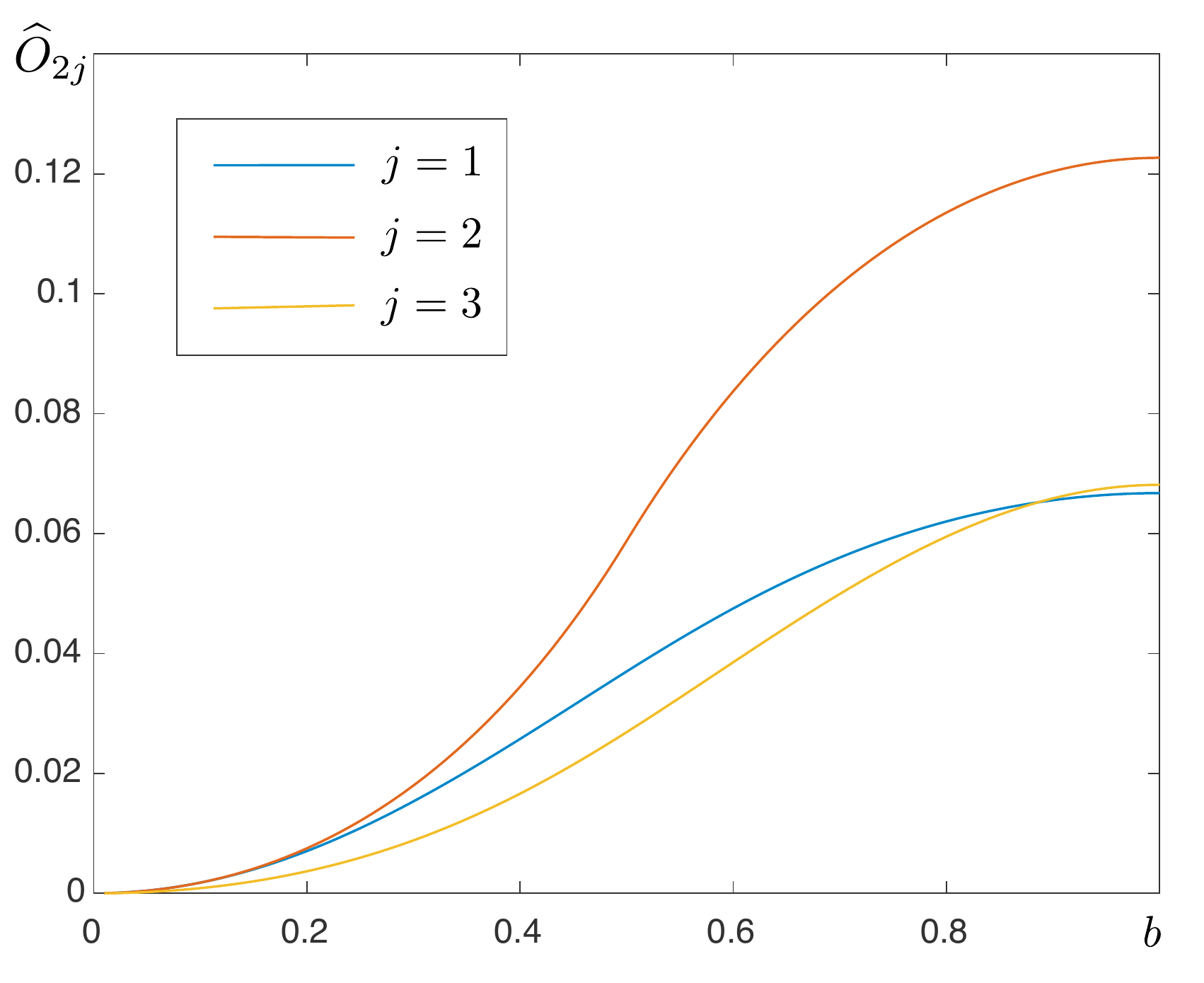}
    \end{subfigure}
    \caption{Left: Plots of $\widehat{\psii}^-_{2j}$ as a function of $u$. Right: Plots of $\widehat{O}_{2j}$ as a function of $b$ with $u=0.5$. Each plot contains the cases $j=1,2,3$.}
    \label{fig:Ex1UandOccup}
\end{figure}
For small values of $u$, downcrossings of level $0$ are most likely while in State $2$; this can be explained by noting that the random oscillations while in State $2$ (the initial state of $J$) can produce a downcrossing before a switching even occurs. For medium values of $u$, the probabilities of downcrossing level $0$ while in States $1$ and $2$ are comparable, both of which are considerably larger than that corresponding to State $3$; this is due to the state-dependent switching behaviour of $\Lambda$ which favours States $1$ and $2$ while the level is low, and $2$ and $3$ while the level is high. For high values of $u$, the probability of downcrossing $0$ is uniformly low for all states. On the other hand, we can observe that the expected occupation time while in State $2$ is larger than that in State $1$ or $2$ uniformly over all $u$; this is a consequence of State $2$ being the initial state of the system.

While Theorem $\ref{th:mainmain}$ guarantees that $\widehat{\psii}^-_{ij}$ converges to $\psii^-_{ij}$ as the space-grid becomes denser, in Figure \ref{fig:VaryingM} we (empirically) show how fast this convergence happens.
\begin{figure}[h!]
  \centering
        \includegraphics[width=0.5\textwidth]{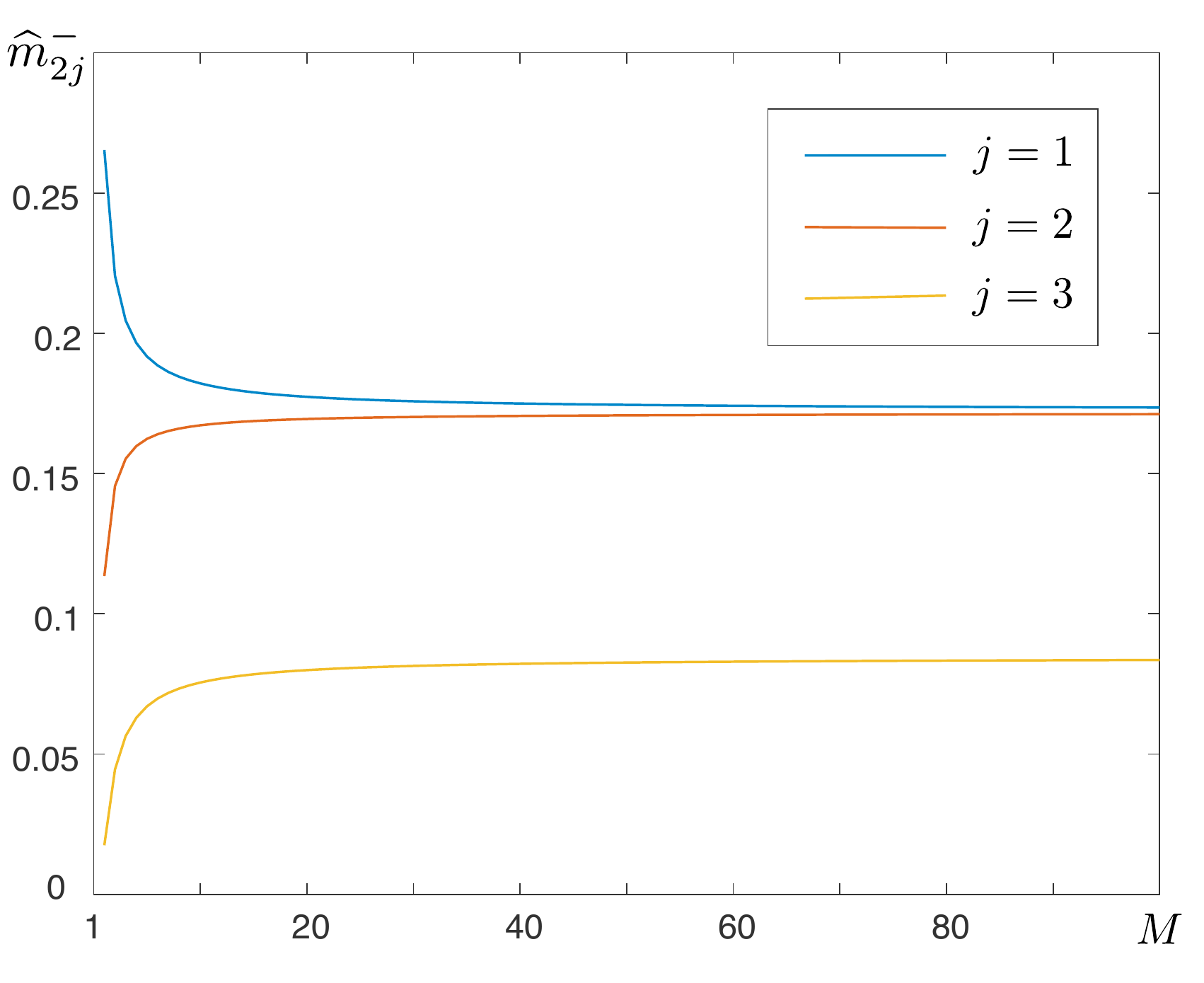}
    \caption{Plots of $\widehat{\psii}^-_{2j}$ as a function of $M$ for the cases $j=1,2,3$.}
    \label{fig:VaryingM}
\end{figure}
As we can appreciate from the plot, convergence is quickly achieved, with differences between the cases being essentially negligible for $M\ge 40$.
\end{example}

\begin{example} \label{ex:hybrid2}
\normalfont
Consider now the same parameters as in Example \ref{ex:hybrid1}, but take $\mu_3(x)=-0.5x^2$ and $\sigma_3=0$. While this alternative scenario does not necessarily have an immediate practical interpretation, it does help to investigate some of the consequences of having a state which lacks any random noise behaviour. From Figure \ref{fig:Ex2UandOccup}, we note that the downcrossing probabilities for States $1$ and $3$ are larger than those in Example \ref{ex:hybrid1}; this is because now we have one of the states pushing the level process downwards. However, note that the probability of downcrossing $0$ while in State $3$ is null; this is because $\mu_3$ approaches $0$ as the level gets closer to $0$, meaning that the process simply cannot cross level $0$ while in State $3$. Moreover, we note that for large values of $b$, the expected occupation time while in State $3$ is larger than in States $1$ or $2$; this suggests that before exiting the band $(0,1)$, the process switches to and stays in State $3$ for a larger amount of time than in Example \ref{ex:hybrid1}.
\begin{figure}[h!]
  \centering
  \begin{subfigure}[b]{0.45\textwidth}
        \includegraphics[width=\textwidth]{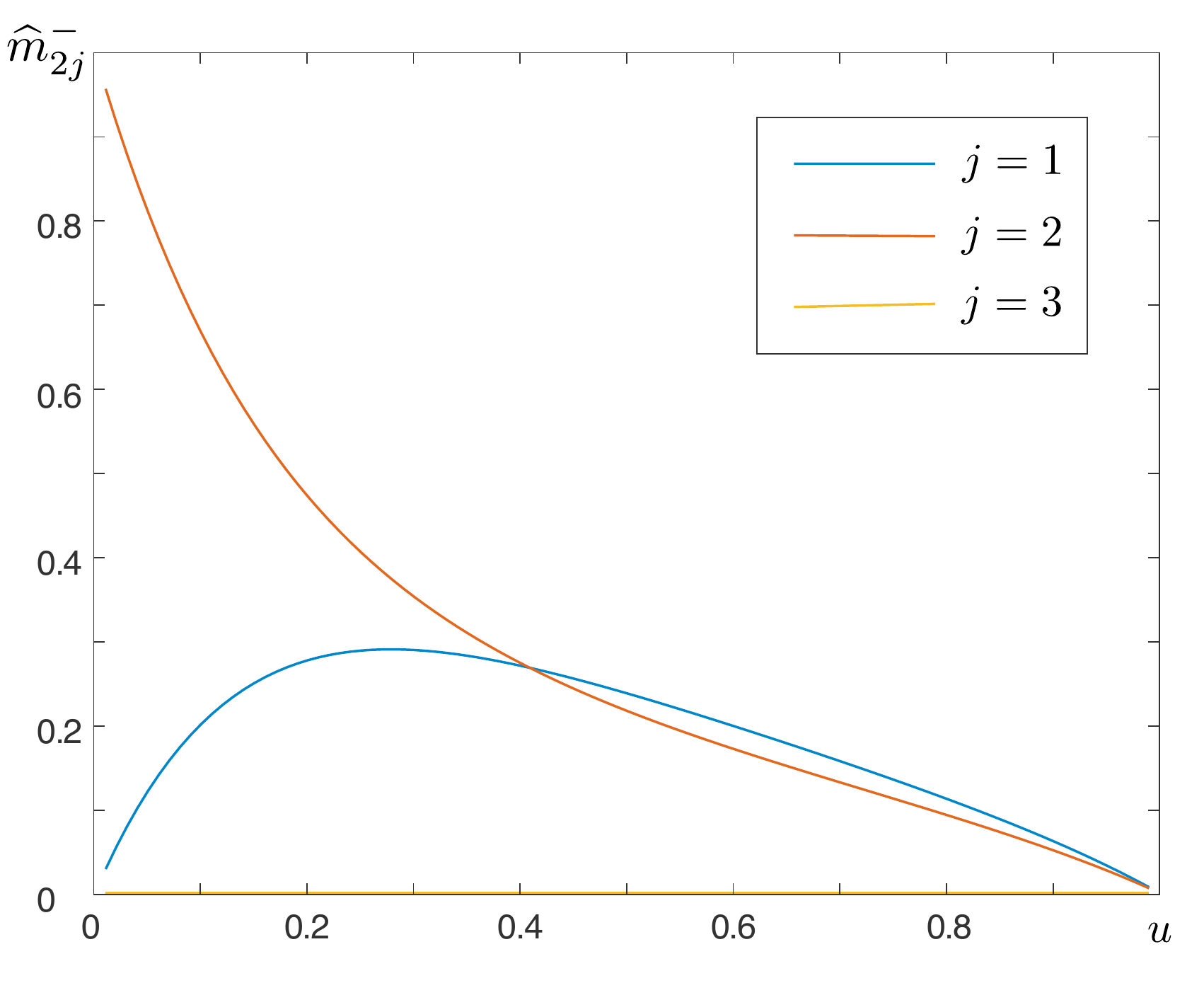}
    \end{subfigure}\qquad
    \begin{subfigure}[b]{0.45\textwidth}
        \includegraphics[width=\textwidth]{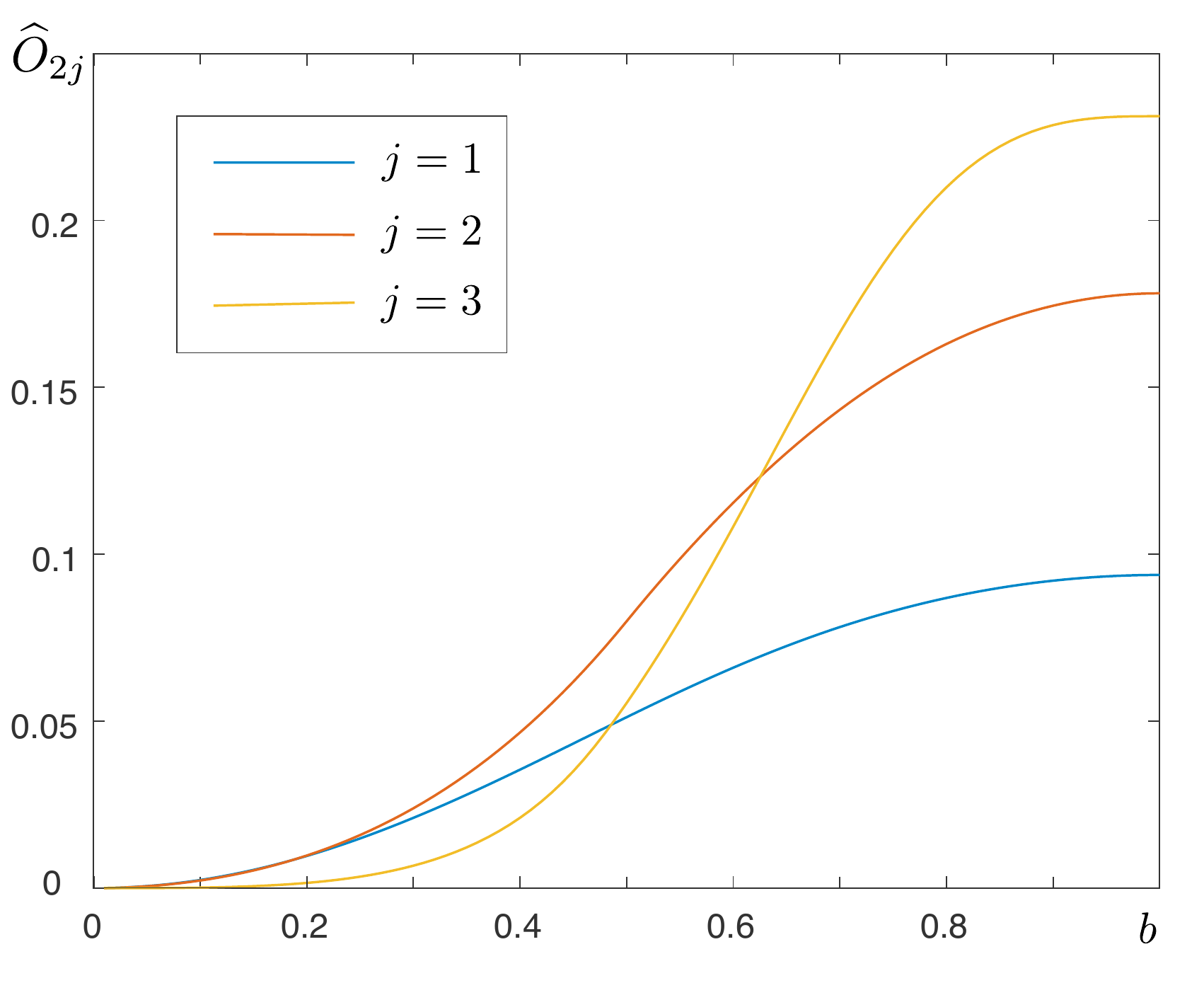}
    \end{subfigure}
    \caption{Left: Plots of $\widehat{\psii}^-_{2j}$ as a function of $u$. Right: Plots of $\widehat{O}_{2j}$ as a function of $b$ with $u=0.5$. Each plot contains the cases $j=1,2,3$.}
    \label{fig:Ex2UandOccup}
\end{figure}
\end{example}

\section{Summary and extensions}\label{sec:summary}
Under Assumptions \ref{ass:Lipschitz1}-\ref{ass:intensities}, we considered the class of hybrid SDEs and their space-grid approximations, the latter belonging to a the family of multi-regime Markov-modulated Brownian motions. We provided a rigorous proof of their pathwise convergence, which holds in a probability sense uniformly over increasing compact intervals. As an application of our convergence result, we considered the challenging problem of identifying first passage probabilities and expected occupation times for solutions of hybrid SDEs, for which we suggest an approximate and computationally efficient answer which employs developments in the area of multi-regime Markov-modulated queues. 

We point out that applications of our convergence result may also be used to approximate other (and possibly more complex) descriptors of hybrid SDEs. For instance, one can easily build hybrid SDEs with phase-type jumps by means of the fluidization method \cite{badescu2005risk}. Other modifications and extensions which are straightforward to implement in our framework are the Omega model \cite{albrecher2011optimal,gerber2012omega}, the Erlangian approximations for finite-time probabilities of ruin \cite{asmussen2002erlangian}, and Parisian ruin problems with phase-type clocks \cite{bladt2019parisian}.

\textbf{Acknowledgement.} The authors would like to acknowledge financial support from the Swiss National Science Foundation Project 200021\_191984.


\bibliographystyle{abbrv}
\bibliography{firstHybrid.bib}

\end{document}